\theoremstyle{plain}
\newtheorem{theorem}{Theorem}
\newtheorem*{theorem*}{Theorem}
\newtheorem{lemma}{Lemma}
\newtheorem{corollary}{Corollary}
\newtheorem*{corollary*}{Corollary}
\newtheorem{definition}{Definition}
\newtheorem*{remark*}{Remark}
\def\mathscr{\mathfrak}
\newcommand{\ds}{\displaystyle}
\def\Ran{\mathrm{Ran\,}}
\def\llangle{\langle\!\langle}
\def\rrangle{\rangle\!\rangle}
\def\e{\varepsilon}
\def\s{\sigma}
\def\l{\lambda}
\def\T{\mathcal{T}}
\def\L{\mathcal{L}}
\def\S{\mathcal{S}}
\def\A{\mathcal{A}}
\def\X{\mathcal{X}}
\def\Y{\mathcal{Y}}
\def\F{\mathcal{F}}
\def\pfi{\varphi}
\def\D{\mathcal{D}}
\def\-{\backslash}
\def\ohat{\widehat{\otimes}}
\def\ds{\displaystyle}
\date{}
\begin{document}

\title{Positive semigroups and abstract Lyapunov equations}

\author{Sergiy Koshkin\\
 Computer and Mathematical Sciences\\
 University of Houston-Downtown\\
 One Main Street, \#S705\\
 Houston, TX 77002\\
 e-mail: koshkins@uhd.edu}

\maketitle
\begin{abstract}\

We consider abstract equations of the form $\A x=-z$ on a locally convex space, where $\A$ generates a positive semigroup and $z$ is a positive element. This is an abstract version of the operator Lyapunov equation $A^*P+PA=-Q$ from control theory. It is proved that under suitable assumptions existence of a positive solution implies that $-\A$ has a positive inverse, and the generated semigroup is asymptotically stable. We do not require that $z$ is an order unit, or that the space contains any order units. As an application, we generalize Wonham's theorem on the operator Lyapunov equations with detectable right hand sides to reflexive Banach spaces.
\bigskip

\textbf{Keywords:} ordered Banach space, positive cone, order unit, $C_0$ semigroup, Pettis integral, 
projective tensor product, implemented semigroup, weak $L^1$ stability, detectability, continuous observability

\textbf{Mathematics Subject Classification:} 47D06, 47B65, 49K27, 34D20
\end{abstract}

\section{Introduction}\label{S1}

Lyapunov observed that stability of the matrix exponent $e^{tA}$ can be established by considering a matrix equation
$A^*P+PA=-Q$, where $P$ is the unknown and $Q$ is a given strictly positive definite matrix. This is the original Lyapunov equation. The equation is closely related to a one-parameter semigroup on the space of symmetric matrices 
$\T(t)P:=e^{tA^*}Pe^{tA}$, its generator is easily seen to be $\mathcal{A}P:=A^*P+PA$. This semigroup is a particular case of so--called implemented semigroups \cite[I.3.16]{EN}, \cite{GN}, \cite[3.4]{Kuh0}, we will refer to it as the Lyapunov semigroup. 

If $Q$ is strictly positive definite and $e^{tA}$ is exponentially stable then $P:=\int_0^\infty e^{tA^*}Qe^{tA}\,dt$ is a positive definite solution to the Lyapunov equation. A celebrated theorem of Lyapunov claims the converse: existence of a positive definite solution implies exponential stability \cite[8.7.2]{Lan}. This is most easily derived from the fact that $\T(t)$ is a positive semigroup on the space of symmetric matrices. Indeed, if we partially order this space by the cone of non-negative definite matrices then for $P\geq0$:
$$
\bigl(\T(t)Px,x\bigr)=\bigl(e^{tA^*}\,P\,e^{tA}x,x\bigr)=\bigl(P\,e^{tA}x,e^{tA}x\bigr)\geq0\,.
$$
So $\T(t)P\geq0$ for $P\geq0$ and $\T(t)$ preserves positivity. Noticing that exponential stabilities of $\T(t)$ and $e^{tA}$ are equivalent we see that Lyapunov's result gives us an equivalence between stability of a positive semigroup $\T(t)$ and positive solvability of the equation $\A P=-Q$ with its generator  $\A$. In addition, positive invertibility of $-\A$ is also equivalent to positive solvability of a single equation.

One difficulty in generalizing the Lyapunov theorem to infinite-dimensional spaces is that even if the underlying  semigroup $T(t)$, replacing $e^{tA}$, is $C_0$ the corresponding Lyapunov semigroup $\T(t)$ on the space of operators is not unless the underlying generator $A$ is bounded. Nonetheless, the Lyapunov equivalence was generalized to the case of Lyapunov semigroups on the space of bounded self-adjoint operators on Hilbert spaces \cite{Dat}, \cite[Lem.3]{Zb0},  and more generally on operator algebras \cite[4.2]{GN}. A crucial role in these generalizations is played by order units, elements strictly positive in a strong sense, which induce a special norm, the order unit norm, that ties together the partial order and the Banach structure. In particular, the identity operator on a Hilbert space is an order unit, and the order unit norm coincides with the spectral norm. However, in applications to the control theory the choice of space is often dictated by the nature of the problem, and it is often not a Hilbert space. 

We are primarily interested in Lyapunov semigroups on spaces of operators on reflexive Banach spaces, where no order units are present. This leads to an abstract setting of a vector space $\X$ ordered by a positive cone $\X^+$, a positive semigroup $\T(t)$ on it with the generator $\A$, and a positive element $z\geq0$ entering an abstract Lyapunov equation 
$\A x=-z$ on $\X$. Various conditions can be imposed on $\X$, $\A$ and $z$ to reproduce a version of the Lyapunov equivalence. The simplest result is obtained when $\X$ is an order unit space, $\A$ is the generator of a positive $C_0$ semigroup on $X$, and $z$ is an order unit \cite[Prop.4.1]{K}. We replace the condition that $z$ be an order unit with a weaker one, tailored to a specific semigroup $\T(t)$, that may hold even when no order units exist. This condition is inspired by the detectability condition of control theory \cite[3.6]{Wh}, and reduces to versions of it for Lyapunov semigroups. For positive $C_0$ semigroups it was first considered in \cite{K}. 

Analytically we work with positive semigroups on locally convex spaces that are not $C_0$, but retain enough properties of $C_0$ semigroups to make sense of generators and abstract Lyapunov equations, see Section \ref{S2}. Many cumbersome arguments of control theory, see e.g. \cite{Ben,CR,GvN,Meg,Zb}, originally developed for Lyapunov semigroups, extend to general positive semigroups and get streamlined in the process. Our main results, Theorems \ref{Lyapw} and \ref{Lyappi} of Section \ref{S3}, are generalizations of the Lyapunov triple equivalence to positive semigroups satisfying very mild analytic assumptions, but they are new even for $C_0$ semigroups on Banach spaces and for their adjoints, $C_0^*$ semigroups. In particular, we introduce a replacement for the order unit norm to prove a stronger version of the equivalence in Theorem \ref{Lyappi}. Aside from specific results our approach provides conceptual unification of positivity based arguments to prove stability.
 
As mentioned above, the Lyapunov semigroups on operator spaces are a particular case of implemented 
semigroups $\T(t)$ on spaces $\L(X,X^*)$ of bounded operators from a Banach space $X$ to its dual. We consider them in Section \ref{S4}, and show that $\T(t)$ are $C_0^*$ semigroups dual to  
$C_0$ semigroups $\T_*(t)$ on the projective tensor product $X\otimes_\pi X$. We then deal with the subspace of the symmetric operators $\L_s(X,X^*)$, where the Lyapunov semigroups live, particularly with the partial order and topologies on it. The main result of Section \ref{S4} is that the Lyapunov semigroups satisfy analytic assumptions in our abstract theorems when $X$ is reflexive.

Finally, in Section \ref{S5} our theorems are applied to the Lyapunov semigroups over reflexive Banach spaces. In particular, we generalize a finite-dimensional theorem of Wonham \cite[12.4]{Wh} that if the pair $(C,A)$ is exponentially detectable and the Lyapunov equation $A^*P+PA=-C^*C$ has a positive definite solution then the semigroup generated by $A$ is exponentially stable, see Theorem \ref{Wonh}. Several tests for verifying detectability are also given.

When $X=H=X^*$ is a Hilbert space a generalization of the Wonham's theorem is due to Zabczyk \cite[Lem.3]{Zb0}. Proving the Lyapunov equivalence over a Banach space $X$ was attempted in \cite{PH}, but the authors assumed existence of a continuous strongly positive definite quadratic form on $X$, which makes it isomorphic to a Hilbert space 
\cite[Thm.3]{Lin}. The Lyapunov equation in general Banach spaces is considered in \cite{GvN,vN2}, where the idea of using reproducing kernel Hilbert spaces is introduced, but without linking existence of solutions to stability of the semigroup. 

The Wonham's theorem is instrumental in proving existence of solutions to the matrix Riccati equation of optimal control, and monotone convergence to them of solutions to an iterative sequence of matrix Lyapunov equations \cite[12.6]{Wh}. We plan to apply our Theorem \ref{Wonh} to control systems in reflexive Banach spaces, a generalization to non-reflexive spaces is also of interest.

\section{Preliminaries}\label{S2}

In this section we fix terminology and notation used in the paper, and recall some basic results from the theory of continuous one-parameter semigroups. Let $\X$ be a real locally convex vector space with the dual space $\X'$. As usual, the duality pairing between $\X$ and $\X'$ is denoted $\langle\cdot,\cdot\rangle$, and $\sigma(\X,\X')$, $\sigma(\X',\X)$ denote the weak topologies on $\X$ and $\X'$ respectively. If $S$ is a densely defined linear operator on $\X$ then $S'$ is its adjoint with the natural domain \cite[IV.2]{Sch}. 
To talk about integrals we need one extra assumption about $\X$, see \cite[3.1.2]{BR}.
\begin{definition} A locally convex space $\X$ is said to have {\sl compactness of convex closures} if the 
closures of convex hulls of metrizable compacts in it are compact.
\end{definition}
\noindent Compactness of convex closures is a completeness assumption, and it follows in particular from sequential completeness of $\X$. Therefore, it is satisfied for weak* topology on duals to Banach spaces, and for the strong and the weak operator topologies. Moreover, it holds for the weak topology on any Banach space by the Krein-\u{S}mulian theorem, even though some of them, like $c_{\,0}$, are not weakly sequentially complete. Compactness of convex closures is a minimal assumption that guarantees existence of Pettis integrals for continuous $\X$-valued functions \cite[2.5.3]{BR}, \cite[3.26]{Rud}, \cite[3.3]{Ry}. 
\begin{definition} Let $\Omega$ be a compact metric space with finite positive Borel measure $\mu$, $\X$ be a locally convex space with compactness of convex closures, and $f:\Omega\to \X$ be continuous. Then there is a unique element $\int_\Omega f\,d\mu\in \X$, called the {\sl Pettis integral} of $f$ over $\Omega$, such that for all $\pfi\in \X'$:
$$
\langle\pfi,\int_\Omega f\,d\mu\rangle=\int_\Omega\langle\pfi,f\rangle\,d\mu\,.
$$
\end{definition}

In most existing theories of one--parameter semigroups on locally convex spaces the semigroups are either assumed to be locally equicontinuous \cite{Alb}, \cite[IX.2]{Yo}, or $\X$ is equipped with an additional Banach norm, in which they are assumed to grow no faster than exponentially in time \cite[3.1.2]{BR}, \cite{Kuh0}--\cite{Kun}, \cite{Sh1}. In both cases the semigroups have the property of locally uniform boundedness $C_b$ defined below, which is easier to verify and which suffices for our purposes. For instance, locally equicontinuous \cite{Alb} and bi--continuous semigroups \cite{Kuh0,Kuh} are always $C_b$, and integrable semigroups on norming dual pairs \cite{Kun0,Kun} are $C_b$ if they are continuous. Of course, $C_0$ semigroups on Banach spaces are also $C_b$. 
\begin{definition}\label{Cb}
A family of continuous operators $\T(t):\X\to \X$ is called a $C_b$ semigroup on $\X$ if

{\rm1)} $\T(0)=I$ and $\T(t+s)=\T(t)\T(s)$ for all $t,s\geq0$.

{\rm2)} $t\mapsto \T(t)x$ is continuous on $[0,\infty)$ for all $x\in \X$.

{\rm3)} $\ds{x\mapsto\sup_{0\leq t\leq a}p\big(\T(t)x\big)}$ is bounded on bounded subsets of $\X$ for all continuous semi-norms $p$ on $\X$, and for all $a>0$ ({\sl locally uniform boundedness}).

\noindent We call $\T(t)$  a $C_b^\sigma$ semigroup if it is a $C_b$ semigroup on $\X$ equipped with $\sigma(\X,\X')$ topology.
\end{definition}

As usual, we define the infinitesimal generator of a $C_b$ semigroup as
$$
\A x:=\lim_{t\to0+}\frac1t(\T(t)-I)x
$$
on $\mathcal{D}_\A$ where the limit exists, and the integrated semigroup as $\S(t)x:=\int_0^t\T(s)x\,ds$ on all of $\X$. 
The integral defining $\S(t)$ is a Pettis integral with $\Omega=[0,t]$ and $\mu$ the Lebesgue measure on it. What we mostly use are the properties of $\A$ and $\S(t)$ collected in the next Lemma. The proofs are similar to those in 
\cite{Kuh,Kun,Sh1}, one can check that locally uniform boundedness can replace norm estimates in the proofs.
\begin{lemma}\label{AS(t)} Let $\T(t)$ be a $C_b$ semigroup on $\X$ with the generator $\A$ and the integrated semigroup 
$\S(t)$. Then

{\rm(i)} $t\mapsto \S(t)x$ is continuously differentiable on $[0,\infty)$, and $\frac{d}{dt}\S(t)x=\T(t)x$ for all $x\in \X$;

{\rm(ii)} $\S(t)\X\subseteq\mathcal{D}_\A$, $\A\,\S(t)=\T(t)-I$ and $\mathcal{D}_\A$ is dense in $\X$;

{\rm(iii)} $x\mapsto \S(t)x$ is sequentially continuous;

{\rm(iv)} $\A\,\S(t)=\S(t)\A$ on $\mathcal{D}_\A$;

{\rm(v)} $\A$ is sequentially closed;

{\rm(vi)} $\int_0^t\S(\tau)x\,d\tau\in\mathcal{D}_\A$ for any $x\in \X$ and  
$\A\int_0^t\S(\tau)x\,d\tau=\bigl(\S(t)-tI\bigr)x$;

{\rm(vii)} $t\mapsto \S(t)x$ is locally uniformly bounded, and $\A\int_0^t\S(\tau)x\,d\tau=\int_0^t\S(\tau)\A x\,d\tau$ for 
$x\in\mathcal{D}_\A$.
\end{lemma}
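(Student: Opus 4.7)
My plan is to adapt the familiar $C_0$ semigroup proofs, with two replacements for norm estimates that are available in the locally convex setting. First, any continuous linear operator commutes with the Pettis integral, by pairing with $\varphi\in\X'$. Second, for any continuous seminorm $p$ on $\X$ and any continuous $f:[0,t]\to\X$, one has
$$
p\Bigl(\int_0^t f(\tau)\,d\tau\Bigr)\leq\int_0^t p\bigl(f(\tau)\bigr)\,d\tau,
$$
which follows from Hahn--Banach by writing $p(y)=\sup\{|\langle\varphi,y\rangle|:\varphi\in B_p^\circ\}$, with $B_p^\circ$ the polar of the seminorm unit ball. The locally uniform boundedness in Definition \ref{Cb}(3) then supplies the dominations needed to invoke scalar dominated convergence in place of norm estimates.

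With those tools I would prove the items in the order (i), (ii), first half of (vii), (iii), (iv), (vi), second half of (vii), (v). For (i) I write
$$
\frac{\S(t+h)x-\S(t)x}{h}-\T(t)x=\frac{1}{h}\int_t^{t+h}\bigl(\T(\tau)x-\T(t)x\bigr)\,d\tau
$$
and apply the seminorm bound together with continuity of $\tau\mapsto\T(\tau)x$. For (ii) the telescoping identity
$$
\frac{1}{h}\bigl(\T(h)-I\bigr)\S(t)x=\frac{1}{h}\int_t^{t+h}\T(\tau)x\,d\tau-\frac{1}{h}\int_0^h\T(\tau)x\,d\tau,
$$
obtained by commuting $\T(h)$ past the integral and a change of variables, gives $\S(t)\X\subseteq\mathcal{D}_\A$ and $\A\S(t)=\T(t)-I$ upon letting $h\to 0+$; density of $\mathcal{D}_\A$ then follows because $t^{-1}\S(t)x\in\mathcal{D}_\A$ converges to $x$ by (i). The first half of (vii) and statement (iii) both reduce to the seminorm bound $p(\S(t)y)\leq\int_0^t p(\T(\tau)y)\,d\tau$; for (iii) the integrand tends to zero pointwise and is dominated on the bounded set $\{x_n-x\}$ by locally uniform boundedness. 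For (iv), sequential continuity of $\T(\tau)$ applied to the difference quotients of $x\in\mathcal{D}_\A$ shows $\T(\tau)$ preserves $\mathcal{D}_\A$ and commutes with $\A$ there; integrating $\frac{d}{d\tau}\T(\tau)x=\T(\tau)\A x$ yields the identity. For (vi), a telescoping argument analogous to (ii), using $\T(h)\S(\tau)x=\S(\tau+h)x-\S(h)x$, produces $\A\int_0^t\S(\tau)x\,d\tau=(\S(t)-tI)x$; the second half of (vii) then follows by applying (iv) under the integral sign. Finally for (v), given $x_n\to x$ and $\A x_n\to y$, item (ii) gives $\S(t)\A x_n=\T(t)x_n-x_n$, and passing to the limit via (iii) yields $\S(t)y=\T(t)x-x$; dividing by $t$ and letting $t\to 0+$, the left side tends to $y$ by (i), so $x\in\mathcal{D}_\A$ and $\A x=y$.

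The main obstacle I anticipate is not any single item but the systematic bookkeeping required to move limits through Pettis integrals without a norm: every such passage must be routed through the seminorm bound above, with locally uniform boundedness playing the role of the scalar dominator. Once that framework is in place the seven items become essentially standard semigroup calculations, parallel to the treatments in \cite{Kuh,Kun,Sh1} as the author indicates.
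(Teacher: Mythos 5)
Your proposal is correct and follows exactly the route the paper intends: the paper gives no written proof but states that the arguments of \cite{Kuh,Kun,Sh1} carry over once norm estimates are replaced by locally uniform boundedness, and your two tools (commuting continuous operators past the Pettis integral, and the seminorm bound $p(\int f)\leq\int p(f)$ combined with scalar dominated convergence) are precisely that replacement. The ordering of the seven items and the individual arguments are all sound.
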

\noindent Local equicontinuity is usually used to prove that $\S(t)$ is a continuous operator, but locally uniform boundedness only implies sequential continuity. Accordingly, the generator is only sequentially closed. The author of \cite{Sh1} asserts continuity of $\S(t)$ in a similar context, but the intended proof overlooks that convergent nets may be unbounded. If strong continuity of $\S(t)$ can be established by other means then unrestricted closure for $\A$ will also follow, see e.g. \cite{Kun0} where the Banach structure is used. However, unrestricted closure is more than one needs for our purposes.

\section{Stability of positive semigroups\\ and Lyapunov equations}\label{S3}

In this section we assume that $\A$ generates a positive $C_b$ semigroup and introduce suitable notions of stability to prove general versions of the Lyapunov equivalence. As is well known, there exist multiple inequivalent concepts of stability in infinite dimensions \cite{vNSW}. Accordingly, we prove two kinds of Lyapunov theorems. The first one assumes that bounded monotone sequences converge in $\X$, but only applies to $\sigma(\X,\X')$ continuous semigroups, the 
$C_b^\s$ semigroups. This result and its proof demostrate the idea behind the Lyapunov equivalence most clearly, but the type of stability it provides is often too weak for applications. For the second theorem we assume that $\X,\X'$ form a norming dual pair of Banach spaces, and require the semigroup to be compatible with the norm in some sense. The norm then serves as a replacement for the missing order unit norm. The concept of detectors, central in this section, as well as the ideas of proving main Theorems \ref{Lyapw} and \ref{Lyappi} come from inspecting positivity based arguments in control theory, see especially \cite{GvN} and \cite{Zb0}.

Let $\X$ be a locally convex space with compactness of convex closures. Assume that it is partially ordered by a closed proper cone $\X^+$, and let $\X'^+:=\{\pfi\in \X'\,|\, \langle\pfi,x\rangle\geq0\text{ for all }x\in \X^+\}$ be the dual cone as usual. We will always assume that $\X'^+$, but not necessarily $\X^+$, is {\sl generating} (a.k.a. reproducing), i.e. $\X'=\X'^+-\X'^+$. This is dictated by applications, see Section \ref{S4}. We wish to study the Lyapunov equation $\A x=-z$ for generators of positive $C_b$ semigroups on $\X$. If $z\geq0$ is appropriately chosen existence of a positive solution $x$ should imply inverse positivity of $-\A$ and asymptotic stability of its semigroup $\T(t)$. 

Recall that an element $e\in \X^+$ is called an order unit if for every $x\in\X$ there is $\lambda>0$ such that $-\l e\leq x\leq\l x$. One can then introduce a norm on $\X$, called the order unit norm \cite[A.2.7]{Clem}, by $||x||_e:=\inf\{\l>0\,|-\l e\leq x\leq\l e\}$. If $\X$ is complete in this norm it is a Banach space called an order unit space. In order unit spaces one can take $z=e$ to prove a strong form of the Lyapunov equivalence, see e.g. \cite[Prop.4.1]{K}. Note that 
$\X$ may contain order units without being an order unit space, e.g. if we equip an order unit space with a different topology, but we are mostly interested in cases where no order units are present at all.

We begin by specifying the type of stability and conditions on $z$ that deliver a Lyapunov type theorem for the $C_b^\s$ semigroups.
\begin{definition} Let $\T(t)$ be a positive $C_b^\s$ semigroup on $\X$. It is called {\sl weakly $L^1$ stable on $\X^+$} if $\int_0^\infty\langle\pfi,\T(t)x\rangle\,dt<\infty$ for all $\pfi,x\geq0$. An element $z\in \X^+$ is called a {\sl weak $L^1$ detector} if for every $\pfi\in \X'^+$:
$$
\int_0^\infty\langle\pfi,\T(t)z\rangle\,dt<\infty\implies\int_0^\infty\langle\pfi,\T(t)x\rangle\,dt<\infty\text{ for all }x\in \X^+.
$$
\end{definition}
\noindent Since we do not assume that $\X^+$ is generating our weak $L^1$ stability on $\X^+$ may be weaker than the usual weak $L^1$ stability on $\X$ \cite{vNSW}. The point is that we are interested in tracking $\T(t)$ only on the positive cone. By definition, if $z$ is a weak $L^1$ detector then $\langle\pfi,\T(t)z\rangle$ displays the correct asymptotic behavior for every $\pfi\geq0$, i.e. $z$ literally detects weak $L^1$ stability. For positive 
$C_0$ semigroups weak $L^1$ detectors were considered in \cite{K}. For Lyapunov semigroups over Hilbert spaces weak $L^1$ detection will reduce to a well-known notion of control theory, see Section \ref{S5}. We will discuss verification of this property at length later, for now note that if $\X$ contains order units then any of them is a weak $L^1$ detector for any positive semigroup. Moreover, if $z$ is a detector then so is $z+a$ for any $a\geq0$. 
\begin{theorem}\label{Lyapw} Let $\T(t)$ be a positive $C_b^\s$ semigroup on $\X$ with the generator $\A$ and a weak 
$L^1$ detector $z$. Assume additionally that topologically bounded monotone sequences converge in $\X$. 
Then the following conditions are equivalent:

{\rm(i)} $\A x=-z$ has a positive solution $x\in\D_\A\cap\X^+$;

{\rm(ii)} $\T(t)$ is weakly $L^1$ stable on $\X^+$;

{\rm(iii)} $\A$ is algebraically invertible on $\X^+$ and $-\A^{-1}\geq0$.
\end{theorem}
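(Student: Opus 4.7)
The plan is to prove the cycle (iii) $\Rightarrow$ (i) $\Rightarrow$ (ii) $\Rightarrow$ (iii). The first implication is immediate: given (iii), the element $x := -\A^{-1}z$ lies in $\D_\A \cap \X^+$ and solves $\A x = -z$. For (i) $\Rightarrow$ (ii), I would run the standard ``integrate along the orbit'' computation. Given $x \in \D_\A \cap \X^+$ with $\A x = -z$, Lemma \ref{AS(t)}(ii),(iv) give $-\S(t)z = \S(t)\A x = \A \S(t)x = \T(t)x - x$, so
$$
\S(t)z = x - \T(t)x \leq x
$$
by positivity of $\T(t)x$. Pairing with $\pfi \in \X'^+$ and using the Pettis integral definition of $\S(t)$ yields $\int_0^t \langle \pfi, \T(s)z\rangle\,ds = \langle \pfi, \S(t)z\rangle \leq \langle \pfi, x\rangle$, hence $\int_0^\infty \langle \pfi, \T(s)z\rangle\,ds < \infty$. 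The weak $L^1$ detector hypothesis on $z$ then upgrades this to $\int_0^\infty \langle \pfi, \T(s)y\rangle\,ds < \infty$ for every $y \in \X^+$, which is (ii).

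The main content is (ii) $\Rightarrow$ (iii). My candidate for the inverse is $-\A^{-1}y := $ the weak limit of $\S(t)y$ as $t \to \infty$, for $y \in \X^+$. The family $\{\S(t)y\}_{t \geq 0}$ is monotone increasing in the order, and weakly bounded because for $\pfi \in \X'^+$ we have $\langle \pfi, \S(t)y\rangle = \int_0^t \langle \pfi, \T(s)y\rangle\,ds$, uniformly bounded in $t$ by (ii). Since $\X'^+$ generates $\X'$, the bound extends to all $\pfi \in \X'$, so $\{\S(n)y\}_{n\in\N}$ is a topologically bounded monotone sequence and converges weakly to some $x_y \in \X$ by hypothesis; monotonicity propagates this to $\S(t)y \to x_y$ weakly along the continuous parameter $t\to\infty$. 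To compute $\A x_y$, I would pass to the limit $\tau \to \infty$ in the semigroup identity $\T(t)\S(\tau)y = \S(t+\tau)y - \S(t)y$, using that each $\T(t)$ is weakly continuous as a $C_b^\sigma$ operator, to obtain $\T(t)x_y = x_y - \S(t)y$. Dividing by $t$ and sending $t \to 0^+$, the right-hand side tends weakly to $-y$ by Lemma \ref{AS(t)}(i), so $x_y \in \D_\A$ with $\A x_y = -y$; positivity of $x_y$ follows because $\X^+$, being closed and convex, is also weakly closed.

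It remains to check that $x_y$ is the unique positive preimage of $-y$, so that $-\A^{-1}$ is well-defined. If $x \in \D_\A \cap \X^+$ is another element with $\A x = -y$, the computation of the first paragraph again gives $\S(t)y = x - \T(t)x \leq x$, so $u := x - x_y$ sits in $\D_\A \cap \X^+$ with $\A u = 0$; Lemma \ref{AS(t)}(ii),(iv) then yield $\T(t)u - u = \A \S(t)u = \S(t)\A u = 0$, hence $\T(t)u \equiv u$. Consequently $\int_0^\infty \langle \pfi, \T(t)u\rangle\,dt$ diverges unless $\langle \pfi, u\rangle = 0$; weak $L^1$ stability forces the integral finite for every $\pfi \in \X'^+$, and since $\X'^+$ generates $\X'$, Hahn--Banach gives $u = 0$. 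The step I expect to demand the most care is in the second paragraph: legitimately transferring the weak monotone convergence $\S(\tau)y \to x_y$ through the operator $\T(t)$, and then reading off from the resulting clean identity $\T(t)x_y = x_y - \S(t)y$ that $x_y$ genuinely belongs to the sequentially defined weak domain $\D_\A$.
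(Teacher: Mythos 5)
Your proof is correct, and for the key implication ${\rm(ii)}\Rightarrow{\rm(iii)}$ it takes a genuinely different route from the paper's. The paper also builds $\S_\infty y$ as the monotone weak limit of $\S(t)y$, but since weak $L^1$ stability does not give $\T(t)y\to0$ it cannot pass to the limit in $\A\S(t)y=\T(t)y-y$ directly; instead it goes through Ces\`aro averages, $\frac1t\int_0^t\S(\tau)y\,d\tau\to\S_\infty y$ while $\A\,\frac1t\int_0^t\S(\tau)y\,d\tau=\frac1t\S(t)y-y\to-y$, and then invokes sequential closedness of $\A$ via Lemma \ref{AS(t)}(v),(vi). Your device of letting $\tau\to\infty$ in $\T(t)\S(\tau)y=\S(t+\tau)y-\S(t)y$ to obtain the clean identity $\T(t)\S_\infty y=\S_\infty y-\S(t)y$, and then differentiating at $t=0^+$ using $\frac1t\S(t)y\to y$ (Lemma \ref{AS(t)}(i)), is legitimate: $\T(t)$ is $\sigma(\X,\X')$ continuous by the definition of a $C_b^\sigma$ semigroup, so it commutes with the Pettis integral and with the weak limit in $\tau$. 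This buys a proof that needs neither the Ces\`aro machinery nor the sequential closedness of the generator; it is essentially the ``simpler proof'' the paper remarks is unavailable for lack of weak stability, recovered by sending the other time parameter to infinity.

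The one substantive divergence is in the left-inverse half of ${\rm(iii)}$. The paper asserts $\S_\infty(-\A x)=x$ for every $x\in\D_\A$ with $-\A x\geq0$, whereas you prove only that $\S_\infty y$ is the unique solution of $\A x=-y$ in $\D_\A\cap\X^+$. Your kernel argument ($\A u=0$ with $u\geq0$ forces $\T(t)u\equiv u$, hence $u=0$ by $L^1$ stability and the generating property of $\X'^+$) is sound, but it uses $u\geq0$ essentially: since $\X^+$ need not be generating, weak $L^1$ stability on $\X^+$ does not exclude nonzero fixed vectors of $\T(t)$ outside the cone, so the unrestricted left-inverse identity is not reachable this way (and the paper's own passage to the limit $-\frac1t\S(t)x+x\to x$ likewise presupposes $\frac1t\S(t)x\to0$ for such $x$, which is not automatic). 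Be explicit about which reading of ``algebraically invertible on $\X^+$'' you adopt, and record that $y\mapsto\S_\infty y$ is additive and positively homogeneous on $\X^+$ --- immediate from linearity of each $\S(t)$ and uniqueness of weak limits --- so that ``$-\A^{-1}\geq0$'' is meaningful.
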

\begin{proof}${\rm(i)}\Rightarrow{\rm(ii)}$ Since $x\in\D_\A$ we have by Lemma \ref{AS(t)}(ii),(iv) 
that $\S(t)\A x=\bigl(\T(t)-I\bigr)x=-\S(t)z=-\int_0^t\T(s)z\,ds$. Therefore, for any $\pfi\in \X'^+$
$$
\int_0^t\langle\pfi,\T(s)z\rangle\,ds=\langle\pfi,x-\T(t)x\rangle\leq\langle\pfi,x\rangle
$$
since $\T(t)x\geq0$. This implies $\int_0^\infty\langle\pfi,\T(t)z\rangle\,dt<\infty$ for all $\pfi\in \X'^+$, and 
$\T(t)$ is weakly $L^1$ stable on $\X^+$ since $z$ is a detector.

${\rm(ii)}\Rightarrow{\rm(iii)}$ Let $x\in \X^+$ and $\pfi\in \X'^+$, then by definition of $\S(t)$
$$
\langle\pfi,\S(t)x\rangle=\int_0^t\langle\pfi,\T(s)x\rangle\,ds\leq\int_0^\infty\langle\pfi,\T(s)x\rangle\,ds<\infty.
$$
Since $\X'^+$ is generating there are $\pfi^\pm\in \X'^+$ such that $\pfi=\pfi^+-\pfi^-$ for any $\pfi\in \X'$. It follows that $|\langle\pfi,\S(t)x\rangle|$ is bounded for any $\pfi\in \X'$. This means that $\{\S(t)x,\,t\geq0\}$ is $\sigma(\X,\X')$ bounded for all $x\in \X^+$. Since $\S(t)x=\int_0^t\T(s)x\,ds$ and $\T(s)\geq0$ we see that $\S(t)x$ is monotone increasing in $t$. By the assumption about monotone sequences, there is a limit $\S(t)x\xrightarrow[t\to\infty]{}y$ and we define 
$\S_\infty x:=y$ on $\X^+$. Obviously, $\S_\infty\geq0$.

Since $\S(t)x\xrightarrow[t\to\infty]{}\S_\infty x$ we have $\frac1t\S(t)x\xrightarrow[t\to\infty]{}0$, so 
$\frac1t\int_0^t\S(\tau)\,d\tau\xrightarrow[t\to\infty]{}\S_\infty x$ by the usual property of Ces\`aro limits. 
By Lemma \ref{AS(t)}(vi), $\frac1t\int_0^t\S(\tau)\,d\tau\in\D_\A$ and $\A\,\frac1t\int_0^t\S(\tau)\,d\tau=\frac1t\S(t)x-x$. 
Passing to limit one obtains $\S_\infty x\in\D_\A$ and $\A \S_\infty x=-x$ since $\A$ is sequentially closed.

If $x\in\D_\A$ and $-\A x\geq0$ then as above $\S(t)\bigl(-\A x\bigr)\xrightarrow[t\to\infty]{}\S_\infty\bigl(-\A x\bigr)$, 
$\frac1t\S(t)\bigl(-\A x\bigr)\xrightarrow[t\to\infty]{}0$ and $\frac1t\int_0^t\S(\tau)\bigl(-\A x\bigr)\,d\tau\xrightarrow[t\to\infty]{}\S_\infty\bigl(-\A x\bigr)$. By Lemma \ref{AS(t)}(vi),(vii), for $x\in\D_\A$ we have on the other hand:
$$
\frac1t\int_0^t\S(\tau)\bigl(-\A x\bigr)\,d\tau=-\frac1t\A\int_0^t\S(\tau)x\,d\tau
=-\frac1t\bigl(\S(t)-tI\bigr)x=-\frac1t\S(t)x+x\xrightarrow[t\to\infty]{}x\,,
$$
so $\S_\infty\bigl(-\A x\bigr)=x$. Thus, $\S_\infty\geq0$ is the algebraic inverse of $-\A$ on $\X^+$.

${\rm(iii)}\Rightarrow{\rm(i)}$ is obvious since $z\in \X^+$ by definition.
\end{proof}
\noindent The proof of ${\rm(ii)}\Rightarrow{\rm(iii)}$ would be simpler if weak $L^1$ stability implied weak stability, i.e. $\T(t)x\xrightarrow[t\to\infty]{}0$ weakly. Then we could use that
$\A \S(t)=\T(t)x-x$ directly to establish invertibility instead of resorting to Ces\`aro limits. However, weak convergence of $\frac1t\S(t)x$ to $0$ may not imply the same for $\T(t)x$ even for $C_0$ semigroups in Banach spaces \cite[Prop.4.4]{EFNS}.

To illustrate the theorem we will apply it to $C_0^*$ semigroups, the weak* continuous semigroups on spaces dual to Banach spaces. If $\T(t)$ is such a semigroup on $\X$ then the adjoint semigroup $\T^*(t)$ on the Banach dual space 
$\X^*$ leaves the pre-dual $\X_*\subseteq \X^*$ invariant, and $\T(t)$ is adjoint to the restriction $\T_*(t)$ of $\T^*(t)$ to $\X_*$. This pre-dual semigroup is a $C_0$ semigroup on $\X_*$ \cite[3.1]{BR}. Recall that a positive cone is called normal if order bounded sets are topologically bounded. It is known that if $\X^+$ is normal and generating then so is 
$\X^{*+}$ \cite[A.2.4]{Clem}, and any positive operator from a space with a generating cone to a space with a normal cone is norm bounded \cite[A.2.11]{Clem}.
\begin{corollary}\label{Lyapw*} Let $\T(t)$ be a positive $C_0^*$ semigroup on a space dual to a Banach space with the generator $\A$ and a weak* $L^1$ detector $z$. Assume that $\X^+$ is normal and generating. Then the following conditions are equivalent:

{\rm(i)} $\A x=-z$ has a positive solution $x\in\D_\A\cap \X^+$;

{\rm(ii)} $\T(t)$ is weakly* $L^1$ stable, i.e. $\int_0^\infty|\langle\pfi,\T(t)x\rangle|\,dt<\infty$ for all 
$\pfi\in \X_*^+$ and $x\in \X^+$;

{\rm(iii)} $\A$ has a bounded inverse and $-\A^{-1}\geq0$.
\end{corollary}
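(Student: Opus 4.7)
The plan is to derive Corollary \ref{Lyapw*} from Theorem \ref{Lyapw} applied to $\X$ with the weak-* topology $\sigma(\X,\X_*)$, and then upgrade the algebraic inverse in clause (iii) to a bounded inverse on all of $\X$ using the normal and generating cone.

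First I would check that a $C_0^*$ semigroup $\T(t)$ on $\X$ is a $C_b^\sigma$ semigroup. Weak-* continuity is built into the $C_0^*$ definition, and local uniform boundedness follows from the $C_0$ character of the pre-dual semigroup $\T_*(t)$: $\|\T(t)\|=\|\T_*(t)\|$ is uniformly bounded on compact intervals, every continuous seminorm on $\sigma(\X,\X_*)$ is dominated by $|\langle\pfi,\cdot\rangle|$ for some $\pfi\in\X_*$, and weak-* bounded sets are norm-bounded by the uniform boundedness principle. Compactness of convex closures in the weak-* topology on a dual Banach space is already noted in the preliminaries. For the monotone convergence hypothesis, a topologically bounded monotone sequence is norm-bounded, and in a dual Banach space with a normal closed cone it has a unique weak-* cluster point: the order interval it spans is norm-bounded by normality, hence weak-* relatively compact by Banach-Alaoglu, and closedness of the cone together with monotonicity force any two cluster points to be equal. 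Positivity of $\T(t)$ makes $\langle\pfi,\T(t)x\rangle\geq 0$ for $\pfi,x\geq 0$, so the absolute values in (ii) are redundant and (ii) matches the weak $L^1$ stability of Theorem \ref{Lyapw} with $\X'=\X_*$. The theorem then yields (i)$\Leftrightarrow$(ii)$\Leftrightarrow$algebraic invertibility of $-\A$ on $\X^+$ with $-\A^{-1}\geq 0$.

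To upgrade this to a bounded inverse on all of $\X$, I use that $\X^+$ is generating to extend $\S_\infty$ linearly by $T(x^+-x^-):=\S_\infty(x^+)-\S_\infty(x^-)$, well-defined by additivity of $\S_\infty$ on $\X^+$. Clearly $Tx\in\D_\A$ and $-\A T=I$ on $\X$. Since $T\geq 0$ maps the Banach space $\X$, with its generating normal cone, into itself, $T$ is norm-bounded by \cite[A.2.11]{Clem}. The main obstacle is showing $\A$ is injective, so that $T$ is also a left inverse. If $\A y=0$, then Lemma \ref{AS(t)}(ii),(iv) give $\T(t)y=y$ for all $t\geq 0$ and hence $\S(t)y=ty$. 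Writing $y=y^+-y^-$ via the generating cone and using linearity, $\S(t)y=\S(t)y^+-\S(t)y^-\xrightarrow[t\to\infty]{}\S_\infty y^+-\S_\infty y^-=Ty$ weak-*. So $ty\to Ty$ weak-*, which forces $\langle\pfi,y\rangle=0$ for every $\pfi\in\X_*$ and hence $y=0$. This completes the upgrade of clause (iii).
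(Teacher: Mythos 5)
Your proposal is correct and follows essentially the same route as the paper: reduce to Theorem \ref{Lyapw} for the weak* topology (using Banach--Alaoglu for convergence of bounded monotone sequences), then extend $\S_\infty$ linearly via the generating cone and invoke normality to get norm boundedness of the positive extension. The only substantive addition is your explicit verification that $\A$ is injective (so that the extension is a genuine two-sided inverse), a point the paper's proof passes over silently; that argument is valid and a welcome extra detail.
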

\begin{proof}
By definition, $C_0^*$ semigroups are exactly the $\sigma(\X,\X_*)$ continuous semigroups, moreover they are $C_b^\sigma$ due to norm estimates on $\T_*(t)$. By the Banach-Alaoglu theorem, bounded subsets of $\X$ are weak* precompact implying that bounded monotone sequences weakly* converge in $\X$. Thus, the assumption on monotone sequences in Theorem \ref{Lyapw} is automatically satisfied for Banach $\X$ with weak* topology. Since $\X^+,\X^{*+}$ are both generating weak* $L^1$ stability on $\X^+$ is equivalent to the usual one on $\X$ because $\pfi, x$ can be decomposed into positive and negative parts. For the same reason $-\A^{-1}$ extends to an everywhere defined positive linear operator on $\X$. Since $\X^+$ is normal and generating this operator is norm bounded, the corollary now follows directly from Theorem \ref{Lyapw}.
\end{proof}
\noindent The implication ${\rm(ii)}\Rightarrow{\rm(iii)}$ without positivity of $-\A^{-1}$ holds for any $C_0^*$ semigroup, it is usually stated in terms of its pre-dual $C_0$ semigroup $\T_*(t)$ \cite[7.3]{Clem}. If in addition $\T(t)\geq0$ then inverse positivity also follows from the Laplace transform formula for the resolvent. The converse, however, requires that $\T(t)$ admit a weak* $L^1$ detector. In particular, it always holds in order unit spaces that are dual to a Banach space, e.g. $L^\infty(\mu)$ and spaces of bounded self-adjoint operators on Hilbert spaces.

In general, Theorem \ref{Lyapw} does not guarantee any regularity for $\A^{-1}$. Even its existence on $\X^+$ required an extra assumption about convergence of bounded monotone sequences that was instrumental in the proof. This is because weak $L^1$ stability by itself is too weak to impart $\int_0^\infty \T(t)\,dt$ with any regularity. We can not even apply the theorem directly to all Banach spaces, because in some of them, like $c_0$, bounded monotone sequences may not converge. We saw one way to improve on this in Corollary \ref{Lyapw*}, but it is too special. Another way is to impose stronger analytic conditions on $\X$ accompanied by a stronger form of stability. 

Such stronger conditions will be imposed using norming dual pairs \cite[3.1.2]{BR}, \cite{Kun0,Kun,Sh1}. 
\begin{definition} Two Banach spaces $\X,\Y$ are called a {\sl norming dual pair} if $\Y$ is a closed subspace of the Banach dual $\X^*$, which is norming, i.e. $\|x\|=\sup\{|\langle\pfi,x\rangle|\,\big|\,\pfi\in\Y,\,\pi(\pfi)\leq1\}$ for all $x\in\X$. 
\end{definition}
\noindent From now to the end of this section we assume that $\X,\X'$ form a partially ordered norming dual 
pair, and denote the Banach norms on both spaces $\pi$. It follows from the definition that the norm topology on $\X$ is stronger than the original locally convex topology, and linear operators on $\X$ are norm bounded if and only if they are $\s(\X,\X')$ continuous \cite[1.2]{Kun0}. The norming condition also implies the following natural inequality for Pettis integrals:
\begin{equation}\label{NormPettis}
\pi\Big(\int_\Omega f\,d\mu\Big)
=\sup_{\pi(\pfi)\leq1}|\langle\pfi,\int_\Omega f\,d\mu\rangle|
=\sup_{\pi(\pfi)\leq1}|\int_\Omega \langle\pfi,f\rangle\,d\mu|
\leq\int_\Omega\sup_{\pi(\pfi)\leq1}|\langle\pfi,f\rangle|\,d\mu
=\int_\Omega\pi(f)\,d\mu\,.
\end{equation}
Remarkably, in the presence of order units weak $L_1$ stability often implies exponential stability (see Corollary \ref{Lyapord}), and we will be aiming at a similar result when order units are not available. The following compatibility  condition is meant to make the $\pi$ norms serve as a suitable generalization of the order unit norm $||\cdot||_e$ and its dual. 
\begin{definition} Let $\X,\X'$ be a partially ordered norming dual pair and  $\T(t)$ be a positive $C_b$ semigroup on 
$\X$. We call $\T(t)$ {\sl norm compatible or $\pi$-compatible} if for every $\pfi\in \X'^+$ the function $t\mapsto\pi\bigl(\T'(t)\pfi\bigr)$ is continuous, and we call it {\sl  $L^1$ $\pi$--stable} if $\int_0^\infty\pi\bigl(\T'(t)\pfi\bigr)\,dt<\infty$ for all $\pfi\in \X'^+$. 
\end{definition}
\noindent Of course, if the adjoint semigroup $\T^*(t)$ on $\X^*$ restricts to a $C_0$ semigroup on $\X'$ it is all the more norm compatible, but example of spaces with order units in weak* topology shows that $\pi$-compatibility is a weaker property than strong continuity of $\T'(t)$. Indeed, $\pi\bigl(\T'(t)\pfi\bigr)=\langle\pfi,\T(t)e\rangle$ for positive $\pfi$ when $\pi$ is the order unit norm, so $\pi\bigl(\T'(t)\pfi\bigr)$ is continuous even if $\T'(t)$ is only a $C_0^*$ semigroup. 

The requirement that $\X'$ is a closed subspace of $\X^*$, although technically convenient, can be too restrictive when selecting detectors. We will need more flexibility for dealing with the Lyapunov semigroups.
\begin{definition} An element $z\in \X^+$ is called an {\sl $L^1$ $\pi$-detector relative to $\F$}, where $\F$ is some subset of $\X'^+$, if for every $\pfi\in \F$:
\begin{equation*}\label{L1Detector}
\int_0^\infty\langle\pfi,\T(t)z\rangle\,dt<\infty\implies\int_0^\infty\pi\Bigl(\T'(t)\pfi\Bigr)\,dt<\infty\,.
\end{equation*}
It is called simply an $L^1$ $\pi$-detector when $\F=\X'^+$.

A subset $\F\subset \X'^+$ is called an {\sl $L^1$ $\pi$--stability subspace} for a class of semigroups if stability on $\F$ implies stability on $\X'^+$ for every semigroup in the class. 
\end{definition}
\noindent In our examples $\F$ will be a dense subcone of $\X'^+$, there was no need for it in the case of weak detection because one could choose $\X'$ much more freely. Since $\langle\pfi,\T(t)x\rangle\leq\pi(x)\,\pi\bigl(\T'(t)\pfi\bigr)$ it is immediate that the $\pi$--detection (relative to $\X'^+$) is stronger than the weak detection. It is not clear however, that $\pi$-detectors exist in the absense of order units. The following property can be directly verified in applications via a priori estimates \cite[3.27]{CR}, and turns out to imply $\pi$--detection. 
\begin{definition}\label{dpiobs} Let $\T(t)$ be a positive $\pi$-compatible $C_b$ semigroup on $\X$. 
An element $z\in \X^+$ is called a {\sl final $\pi$-observer at $t_0>0$ relative to $\F$} if there is $\e>0$ such that for every $\pfi\in \F$:
\begin{equation*}\label{FinObserver}
\int_0^{t_0}\langle\pfi,\T(s)z\rangle\,ds\geq\e\,\pi\Bigl(\T'(t_0)\pfi\Bigr)\,.
\end{equation*}
\end{definition}
\noindent The point of the name is that the final state $\T'(t_0)\pfi$ is recoverable from observing 
$\langle\pfi,\T(s)z\rangle$ for $0\leq s<t_0$. The proof of the next theorem is based on the idea from \cite[Thm.4.5]{Meg}.
\begin{theorem}\label{piobs} Relative to any $\T'$--invariant subset $\F\subset \X'^+$ every final $\pi$-observer is an 
$L^1$ $\pi$--detector. 
\end{theorem}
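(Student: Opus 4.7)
The plan is to apply the final $\pi$-observer inequality not to $\pfi$ itself but to its translates $\T'(t)\pfi$, then integrate in $t$ and swap the order of integration to bound $\int_{t_0}^\infty \pi(\T'(u)\pfi)\,du$ in terms of $\int_0^\infty\langle\pfi,\T(u)z\rangle\,du$. The $\T'$-invariance of $\F$ is precisely what allows this substitution, and $\pi$-compatibility takes care of the initial interval $[0,t_0]$.

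Concretely, fix $\pfi\in\F$ and assume $\int_0^\infty\langle\pfi,\T(t)z\rangle\,dt<\infty$. For each $t\geq0$ the element $\T'(t)\pfi$ lies in $\F$, so Definition \ref{dpiobs} applied to $\T'(t)\pfi$ gives
\begin{equation*}
\e\,\pi\bigl(\T'(t_0)\T'(t)\pfi\bigr)\leq\int_0^{t_0}\langle\T'(t)\pfi,\T(s)z\rangle\,ds=\int_0^{t_0}\langle\pfi,\T(t+s)z\rangle\,ds,
\end{equation*}
and since $\T'(t_0)\T'(t)=\T'(t+t_0)$ the left side equals $\e\,\pi(\T'(t+t_0)\pfi)$.

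Next I integrate both sides in $t$ from $0$ to $\infty$. The integrands are non-negative (by positivity of the cone and of $\pi$), so Tonelli's theorem applies to the right side, giving
\begin{equation*}
\e\int_0^\infty\pi\bigl(\T'(t+t_0)\pfi\bigr)\,dt\leq\int_0^{t_0}\int_0^\infty\langle\pfi,\T(t+s)z\rangle\,dt\,ds\leq t_0\int_0^\infty\langle\pfi,\T(u)z\rangle\,du<\infty.
\end{equation*}
After the substitution $u=t+t_0$ on the left, this yields $\int_{t_0}^\infty\pi(\T'(u)\pfi)\,du<\infty$. Finally, $\pi$-compatibility makes $u\mapsto\pi(\T'(u)\pfi)$ continuous on the compact interval $[0,t_0]$, hence $\int_0^{t_0}\pi(\T'(u)\pfi)\,du<\infty$, and combining the two intervals shows $\pfi$ satisfies the $L^1$ $\pi$-detection conclusion.

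The only genuine subtlety is the application of Tonelli and the innocuous change of variables; the rest is a direct unpacking of definitions. Thus I do not anticipate any real obstacle beyond verifying that the positivity hypotheses make the swap of integrals legitimate and that $\T'$-invariance of $\F$ is used exactly once, at the crucial step of feeding $\T'(t)\pfi$ into the observer inequality.
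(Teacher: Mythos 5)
Your proof is correct and follows essentially the same route as the paper: apply the observer inequality to the translates of $\pfi$ under $\T'$ (which is where the $\T'$-invariance of $\F$ enters), integrate, swap the order of integration by positivity, and handle $[0,t_0]$ via the continuity supplied by $\pi$-compatibility. The only cosmetic difference is that you parametrize by $\T'(t)\pfi$ and invoke Tonelli on $[0,\infty)$ directly, whereas the paper shifts by $\T'(t-t_0)\pfi$ and passes to the limit over finite upper bounds $\tau$.
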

\begin{proof}
Pick any $\pfi\geq0$ and consider $\psi:=\T'(t-t_0)\pfi\geq0$ for $t\geq t_0$.  Since $\pfi\in\F$ and $\F$ is $\T'$--invariant also $\psi\in\F$, so applying the definition to $\psi$ and a final $\pi$-observer $z$ at $t_0$:
$$
\int_0^{t_0}\langle\pfi,\T(s+t-t_0)z\rangle\,ds
=\int_0^{t_0}\langle\psi,\T(s)z\rangle\,ds
\geq\e\,\pi\Bigl(\T'(t_0)\psi\Bigr)
=\e\,\pi\Bigl(\T'(t)\pfi\Bigr)\,.
$$
Therefore, for $\tau\geq t_0$
\begin{multline}\label{piT}
\int_0^{\tau}\pi\Bigl(\T'(t)\pfi\Bigr)\,dt
=\int_0^{t_0}\pi\Bigl(\T'(t)\pfi\Bigr)\,dt+\int_{t_0}^{\tau}\pi\Bigl(\T'(t)\pfi\Bigr)\,dt\\
\leq\int_0^{t_0}\pi\Bigl(\T'(t)\pfi\Bigr)\,dt
+\frac1{\e}\int_{t_0}^{\tau}\int_0^{t_0}\langle\pfi,\T(s+t-t_0)z\rangle\,ds\,dt\,.
\end{multline}
The pairing $\langle\pfi,\T(s+t-t_0)z\rangle$ is jointly continuous in $s$ and $t$ and we can switch the order of integration in the second term of \eqref{piT} yielding
$$
\int_{t_0}^{\tau}\int_0^{t_0}\langle\pfi,\T(s+t-t_0)z\rangle\,ds\,dt
=\int_{t_0}^{\tau}\int_s^{s+\tau-t_0}\langle\pfi,\T(t)z\rangle\,dt\,ds
\leq t_0\int_0^\infty\langle\pfi,\T(t)z\rangle\,dt\,.
$$
Since this estimate holds for all $\tau\geq t_0$ and all integrands are positive we can set $\tau=\infty$ in 
\eqref{piT}. Then, provided $\int_0^\infty\langle\pfi,\T(t)z\rangle\,dt<\infty$, we have 
$$
\int_0^\infty\pi\Bigl(\T'(t)\pfi\Bigr)\,dt\leq\int_0^{t_0}\pi\Bigl(\T'(t)\pfi\Bigr)\,dt\,
+\frac{t_0}{\e}\int_0^\infty\langle\pfi,\T(t)z\rangle\,dt<\infty\,,
$$
so by definition $z$ is an $L^1$ $\pi$-detector.
\end{proof}

In the proof of our main result we will need a generalization of the Datko-Pazy theorem given by van Neerven. The classical Datko-Pazy theorem \cite[II.1.2.2]{Ben}, \cite[Prop.9.4]{Clem} states that if $\T(t)$ is a $C_0$ semigroup and $\int_0^\infty\|\T(t)x\|^p\,dt$ for all $x\in\X$ and some $p\geq1$ then $\|\T(t)\|\leq Me^{-\varepsilon t}$ with $M,\e>0$, i.e. $\T(t)$ is exponentially stable. Van Neerven showed that the strong continuity of $\T(t)x$ can be replaced with local boundedness for the conclusion to follow, see Theorem 4.2 and Remark 4.3 in \cite{vN1}.
\begin{theorem}\label{Lyappi} Let $\T(t)$ be a positive $\pi$-compatible $C_b$ semigroup on a norming dual pair $\X,\X'$ with the generator $\A$ and an $L^1$ $\pi$-detector $z$ relative to $\F$, where $\F$ is a stability subset. Assume additionally that $\X'^+$ is generating. Then the following conditions are equivalent:

{\rm(i)} $\A x=-z$ has a positive solution $x\in\D_\A\cap \X^+$;

{\rm(ii)$'$} $\T(t)$ is $L^1$ $\pi$-stable on $\X^+$;

{\rm(ii)} $\T(t)$ is exponentially $\pi$-stable;

{\rm(iii)} $\A$ has a $\pi$-bounded inverse on $\X$ and $-\A^{-1}\geq0$.
\end{theorem}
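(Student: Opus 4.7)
My plan is to establish the cycle $\mathrm{(iii)}\Rightarrow\mathrm{(i)}\Rightarrow\mathrm{(ii)}'\Rightarrow\mathrm{(ii)}\Rightarrow\mathrm{(iii)}$. The implication $\mathrm{(iii)}\Rightarrow\mathrm{(i)}$ is immediate since $z\in\X^+$. For $\mathrm{(i)}\Rightarrow\mathrm{(ii)}'$ I mimic the opening of the proof of Theorem \ref{Lyapw}: from $\A x=-z$ combined with Lemma \ref{AS(t)}(ii),(iv) one obtains $\S(t)z=x-\T(t)x$, hence for any $\pfi\in\F$,
$$
\int_0^t\langle\pfi,\T(s)z\rangle\,ds=\langle\pfi,x\rangle-\langle\pfi,\T(t)x\rangle\leq\langle\pfi,x\rangle
$$
since $\T(t)x\geq0$ and $\pfi\geq0$. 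So $\int_0^\infty\langle\pfi,\T(s)z\rangle\,ds<\infty$; $L^1$ $\pi$-detection relative to $\F$ upgrades this to $\int_0^\infty\pi(\T'(t)\pfi)\,dt<\infty$ for $\pfi\in\F$, and the stability-subset property extends this to all $\pfi\in\X'^+$, giving $\mathrm{(ii)}'$.

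For $\mathrm{(ii)}'\Rightarrow\mathrm{(ii)}$ I apply van Neerven's extension of the Datko-Pazy theorem to the adjoint semigroup $\T'(t)$, viewed on the Banach space $(\X',\pi)$. Two hypotheses must be verified. First, local $\pi$-boundedness of $\T'(t)$: $\pi$-compatibility makes $t\mapsto\pi(\T'(t)\pfi)$ continuous hence locally bounded for $\pfi\in\X'^+$, decomposition via the generating $\X'^+$ propagates this to all $\pfi\in\X'$, and Banach-Steinhaus on $\X'$ then delivers $\sup_{0\leq t\leq a}\pi(\T'(t))<\infty$. Second, $L^1$ integrability extended from the cone to all of $\X'$: again $\pfi=\pfi^+-\pfi^-$ does this. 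Van Neerven's theorem (Theorem~4.2 and Remark~4.3 of \cite{vN1}) yields $\pi(\T'(t))\leq Me^{-\e t}$, and the norming dual pair identity $\pi(\T(t))=\pi(\T'(t))$ translates this back to exponential $\pi$-stability of $\T(t)$.

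For $\mathrm{(ii)}\Rightarrow\mathrm{(iii)}$ I use inequality \eqref{NormPettis} together with $\pi(\T(s)x)\leq Me^{-\e s}\pi(x)$ to show $\{\S(t)x\}_{t\geq0}$ is $\pi$-Cauchy:
$$
\pi\bigl(\S(T)x-\S(t)x\bigr)\leq\int_t^T\pi(\T(s)x)\,ds\leq\frac{M}{\e}e^{-\e t}\pi(x)\qquad(T\geq t).
$$
Define $Rx:=\lim_{t\to\infty}\S(t)x$ in $\pi$-norm; this is a positive $\pi$-bounded operator with $\pi(R)\leq M/\e$. Since $\pi$-convergence is stronger than the locally convex topology and $\A\S(t)x=(\T(t)-I)x\to -x$, sequential closedness of $\A$ from Lemma \ref{AS(t)}(v) gives $Rx\in\D_\A$ and $\A Rx=-x$. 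For $x\in\D_\A$ the identity $R\A x=-x$ follows by taking limits in $\A\S(t)=\S(t)\A$ (Lemma \ref{AS(t)}(iv)), so $R=-\A^{-1}$, positive and $\pi$-bounded.

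The main obstacle is $\mathrm{(ii)}'\Rightarrow\mathrm{(ii)}$: the $C_b$ axioms only control $\T(t)$ in the original locally convex topology of $\X$, which may be strictly weaker than the $\pi$-norm, so the local $\pi$-boundedness of $\T'(t)$ needed to invoke van Neerven's strongly-continuous-free Datko-Pazy theorem has to be bootstrapped carefully from $\pi$-compatibility on the positive cone to the entire Banach space $\X'$ using the generating cone and Banach-Steinhaus.
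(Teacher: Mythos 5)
Your proof is correct and follows essentially the same route as the paper's: the same cycle of implications, the same reduction of (i)$\Rightarrow$(ii)$'$ to the argument of Theorem \ref{Lyapw} via $\pi$-detection and the stability subset, the same appeal to van Neerven's Datko--Pazy theorem for (ii)$'\Rightarrow$(ii), and the same Cauchy-sequence construction of $\S_\infty=-\A^{-1}$ via \eqref{NormPettis} and sequential closedness for (ii)$\Rightarrow$(iii). Your explicit Banach--Steinhaus verification of the local $\pi$-boundedness of $\T'(t)$ (using that norm boundedness of $\T'(t)$ on $\X'$ follows from $\sigma(\X,\X')$-continuity of $\T(t)$ for a norming dual pair) is a hypothesis of van Neerven's theorem that the paper leaves implicit, and is a worthwhile addition.
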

\begin{proof} ${\rm(i)}\Rightarrow{\rm(ii)'}$ Proof that $\int_0^\infty\pi\bigl(\T'(t)\pfi\bigr)\,dt<\infty$ for all $\pfi\in\F$ is analogous to the proof of Theorem \ref{Lyapw} with $L^1$ $\pi$-detection replacing weak $L^1$ detection. Since $\F$ is a stability subset it follows that $\T(t)$ is $L^1$ $\pi$-stable $\X^+$.

${\rm(iii)}\Rightarrow{\rm(i)}$ is obvious since $z\geq0$.

${\rm(ii)}'\Rightarrow{\rm(ii)}$ Since $\X'^+$ is generating we have for any $\pfi\in\X'$ that $\pfi=\pfi^+-\pfi^-$, where $\pfi^{\pm}\in\X'^+$, and $\pi\bigl(\T'(t)\pfi\bigr)\leq\pi\bigl(\T'(t)\pfi^+\bigr)+\pi\bigl(\T'(t)\pfi^-\bigr)$. By $\pi$-compatibility, the right hand side is continuous in $t$, and by $\pi$-stability, it belongs to $L^1$. It follows from \cite[Thm.4.2]{vN1} that $t\mapsto\pi\bigl(\T'(t)\pfi\bigr)$ is measurable and $\int_0^\infty\pi\bigl(\T'(t)\pfi\bigr)\,dt<\infty$ for all $\pfi\in\X'$. By the van Neerven's Datko-Pazy theorem, $\T'(t)$ is exponentially $\pi$-stable. Dualizing, we have the same for $\T(t)$.

${\rm(ii)}\Rightarrow{\rm(iii)}$ Suppose $\pi\big(\T(t)x\big)\leq Me^{-\varepsilon t}\pi(x)$ and consider the Pettis integral $\S(t)x=\int_0^{t}\T(s)x\,ds$. By the inequality \eqref{NormPettis},
\begin{equation}\label{piCauchy}
\pi\Bigl(\S(t)x-\S(\tau)x\Bigr)\leq\int_{\tau}^t\pi\Bigl(\T(s)x\Bigr)\,ds
\leq\frac{M}{\e}\,(e^{-\e\tau}-e^{-\e t})\,\pi(x)
\xrightarrow[t,\tau\to\infty]{}0\,.
\end{equation}
Therefore, there is $y\in\X$ such that $\S(t)x\xrightarrow[t\to\infty]{}y$ by norm, and we set $\S_\infty x:=y$. Setting $\tau=0$ in \eqref{piCauchy} and taking $t\to\infty$ we have $\pi\bigl(\S_\infty x\bigr)\leq\,\frac{M}{\e}\,\pi(x)$, so $\S_\infty$ is a norm bounded linear operator on $\X$. By Lemma \ref{AS(t)}(ii), $\A\,\S(t)x=\T(t)x-x$ for all $x\in \X$, and $\T(t)x\xrightarrow[t\to\infty]{}0$ due to exponential stability. By Lemma \ref{AS(t)}(v), $\A$ is sequentially closed implying that 
$\S_\infty x\in\D_\A$ and $\A\S_\infty x=-x$. Analogously, for $x\in\D_\A$ we have 
$\S(t)\A x=\T(t)x-x$ by Lemma \ref{AS(t)}(iv), and $\S_\infty \A x=-x$ by passing to limit $t\to\infty$. Thus, 
$-\A^{-1}=\S_\infty\geq0$.
\end{proof}
\noindent 
Unlike Theorem \ref{Lyapw} this theorem can be applied directly to $C_0$ semigroups on Banach spaces
since only strong completeness is required. Of course, we still make an additional assumption that 
$t\mapsto\|\T^*(t)\pfi\|$ is continuous for $\pfi\in \X^{*\,+}$. This assumption is automatically satisfied if $\X$ is reflexive, but it may hold even if $\X$ is not. In particular, it holds in all order unit spaces for which 
Theorem \ref{Lyappi} greatly simplifies.
\begin{corollary}\label{Lyapord} Let $\X$ be an order unit space with the norm $\|\cdot\|_e$, and let $\|\cdot\|_e$ also denote the dual norm on $\X^*$. Let $\T(t)$ be a positive $C_0$ semigroup on $\X$ with the generator $\A$ and an $L^1$ 
$\|\cdot\|_e$-detector $z$. Then the following conditions are equivalent:

{\rm(i)} $\A x=-z$ has a positive solution $x\in\D_\A\cap \X^+$;

{\rm(ii)$^\mathrm{w}$} $\T(t)$ is weakly $L^1$ stable on $\X^+$;

{\rm(ii)} $\T(t)$ is exponentially stable;

{\rm(iii)} $\A$ has a bounded inverse on $\X$ and $-\A^{-1}\geq0$.
\end{corollary}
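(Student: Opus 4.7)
The plan is to apply Theorem \ref{Lyappi} with the norming dual pair $(\X,\X^*)$ and $\pi=\|\cdot\|_e$ on both sides, taking $\F:=\X^{*+}$, which is trivially an $L^1$ $\pi$-stability subspace for any class of semigroups. Because $\X$ is an order unit space, $\X^+$ is generating (each $x\in\X$ splits as $\tfrac12(\|x\|_e e+x)-\tfrac12(\|x\|_e e-x)$ with both summands positive) and normal (the inclusion $-e\leq y\leq e$ forces $\|y\|_e\leq 1$), so by the duality result recalled before Corollary \ref{Lyapw*}, $\X^{*+}$ is normal and generating as well. This furnishes all the topological and order-theoretic prerequisites of Theorem \ref{Lyappi}.

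The observation that makes the order unit setting so clean is the identity $\|\pfi\|_e=\langle\pfi,e\rangle$ for $\pfi\in\X^{*+}$, which follows from pairing the sandwich $-\|x\|_e e\leq x\leq\|x\|_e e$ against a positive $\pfi$. Consequently, for every $\pfi\in\X^{*+}$,
$$
\pi\bigl(\T^*(t)\pfi\bigr)=\langle\T^*(t)\pfi,e\rangle=\langle\pfi,\T(t)e\rangle,
$$
which is continuous in $t$ since $\T(t)$ is $C_0$. This establishes $\pi$-compatibility, and Theorem \ref{Lyappi} then delivers the equivalence of (i), the $L^1$ $\pi$-stability of $\T(t)$ on $\X^+$, (ii), and (iii).

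It remains only to splice (ii$^{\mathrm{w}}$) into this chain. Weak $L^1$ stability on $\X^+$ specialised at $x=e\in\X^+$ gives $\int_0^\infty\langle\pfi,\T(t)e\rangle\,dt<\infty$ for all $\pfi\in\X^{*+}$, i.e.\ $L^1$ $\pi$-stability by the identity above. Conversely, any $x\in\X^+$ satisfies $0\leq x\leq\|x\|_e e$, so positivity of $\T(t)$ yields $\langle\pfi,\T(t)x\rangle\leq\|x\|_e\langle\pfi,\T(t)e\rangle$ for every $\pfi\in\X^{*+}$, and $L^1$ $\pi$-stability propagates to weak $L^1$ stability on $\X^+$. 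There is no substantive obstacle here: everything reduces to the fact that on the positive cones the order unit norm and its dual are just pairings with $e$, which collapses the distinction between weak $L^1$ stability at the single point $e$ and general $L^1$ $\pi$-stability, and removes the need for a separate Datko-Pazy step by invoking it through Theorem \ref{Lyappi}.
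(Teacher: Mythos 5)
Your proof is correct and follows essentially the same route as the paper: both hinge on the identity $\|\T^*(t)\pfi\|_e=\langle\T^*(t)\pfi,e\rangle=\langle\pfi,\T(t)e\rangle$ for $\pfi\in\X^{*+}$ to get $\pi$-compatibility and then invoke Theorem \ref{Lyappi}, with (ii$^{\mathrm{w}}$) spliced in via the same identity. You merely make explicit a few details the paper leaves implicit (normality and generation of the cones, and the sandwich $0\leq x\leq\|x\|_e e$ linking weak $L^1$ stability to $L^1$ $\pi$-stability).
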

\begin{proof} For $\pfi\in \X^{*+}$ we have by the properties of the dual order unit norm that $\|\T^*(t)\pfi\|_e=\langle \T^*(t)\pfi,e\rangle=\langle\pfi,\T(t)e\rangle$. Therefore, $t\mapsto\|\T^*(t)\pfi\|_e$ is continuous and we can apply Theorem \ref{Lyappi}. Since $\|\T^*(t)\pfi\|_e=\langle\pfi,\T(t)e\rangle$ exponential stability of $\T(t)$, and hence of $\T^*(t)$, is equivalent to weak $L^1$ stability of $\T(t)$ on $\X^+$.
\end{proof}
\noindent  Even though only $C_0$ semigroups are involved in the statement of Corollary \ref{Lyapord} the proof requires consideration of $\T^*(t)$, which may not be a $C_0$ semigroup on $\X^*$. Also note an intriguing analogy with Hilbert spaces, where  weak $L^1$ stability also implies exponential stability for all $C_0$ semigroups \cite{Ws}.

\section{Implemented and Lyapunov semigroups}\label{S4}

In this section we establish analytic properties of the Lyapunov semigroups over reflexive Banach spaces required to apply to them the results of Section \ref{S3}. Since the Lyapunov semigroups are a particular case of implemented semigroups we start by reviewing those. The tensor product spaces and semigroups then appear naturally as their duals. The duality between the spaces of operators and tensor product spaces is the main technical tool of this section. 

An implemented semigroup is induced by the implementing semigroups denoted $T(t),V(t)$ on a Banach space $X$. The implemented semigroup $\T(t)P:=V^*(t)PT(t)$ then lives on the space $\L(X,X^*)$ of bounded operators from $X$ to $X^*$ \cite[3.4]{Kuh0}. Recall that the algebraic tensor product $X\otimes X$ is the space of finite linear combinations of monomials $x\otimes y$ with $x,y\in X$. The tensor product semigroup on the algebraic tensor product $X\otimes X$ is specified by setting $\T_*(t)(x\otimes y):=T(t)x\otimes V(t)y$. There is a natural duality pairing between $\L(X,X^*)$ and $X\otimes X$ given by $\llangle P,x\otimes y\rrangle:=\langle Px,y\rangle$ on monomials, here $\langle \cdot,\cdot\rangle$ is the duality pairing between $X^*$ and $X$. In general, $V^*(t)$ may only be a $C_0^*$ semigroup, but if $X$ is reflexive it is a $C_0$ semigroup on $X^*$ \cite[3.1.8]{BR}, \cite[IX.1]{Yo}. 

We now recall the definition of the projective tensor norm \cite[2.2.1]{Ry}, which will serve as the $\pi$ norm of Section \ref{S3}.
\begin{definition}\label{pinorm} Let $X$ and $Y$ be Banach spaces and $X\otimes Y$ be their algebraic tensor product. Given $\rho\in X\otimes Y$ its {\sl projective tensor norm} is defined by 
$$
\textstyle{\pi(\rho):=\inf\{\sum_i\|x_i\|\|y_i\|\,\Big|\,\rho=\sum_ix_i\otimes y_i,\,x_i\in X,\,y_i\in Y\}\,.}
$$
The {\sl projective tensor product $X\otimes_\pi Y$} is the completion of $X\otimes Y$ in this norm. 
\end{definition}
\noindent If $X=Y=H$ then $X\otimes_\pi Y$ is isomorphic as a Banach space to the space of trace class operators 
$\L_1(H)$, and the projective tensor norm is equivalent to the trace norm $\text{tr}(P^*P)^{\frac12}$. There is a well-known duality $\L(H)\simeq\L_1(H)^*$ \cite[VI.6]{RS}, which generalizes to the {\sl projective duality} $\L(X,X^*)\simeq(X\otimes_\pi X)^*$ \cite[2.2.2]{Ry}. Thus, $X\otimes_\pi X$ is the Banach pre-dual to $\L(X,X^*)$ and the $\s(\L(X,X^*),X\otimes_\pi X)$ topology is the weak* topology on $\L(X,X^*)$. 

It follows from Definition \ref{pinorm} that $\pi(x\otimes y)=\|x\|\|y\|$, so the dual norm $\pi$ on $\L(X,X^*)$ is just the induced operator norm $\|\cdot\|$, in the Hilbert case $X=H$ it coincides with the spectral norm on $\L(H)$. Being a pre-dual, $X\otimes_\pi X$ is a closed subspace of the Banach dual to $\L(X,X^*)$. Since $X\otimes X$ is already norming for $\L(X,X^*)$ the spaces $\X=\L(X,X^*)$, $\X'=X\otimes_\pi X$ form a norming dual pair. 

Aside from the weak* and the norm topology we will also need the weak operator topology on $\L(X,X^*)$, which can be identified with $\s(\L(X,X^*),X\otimes X^{**})$. When $X$ is reflexive it is the same as $\s(\L(X,X^*),X\otimes X)$ and then clearly is the weakest of the three. However, a simple argument, analogous to the one for 
$\L(H)$ \cite[II.2.5]{Tak}, shows that it coincides with the weak* topology on bounded subsets of $\L(X,X^*)$. When $X$ is not reflexive it is unclear if the norm closure of $X\otimes X^{**}$ in the Banach dual to $\L(X,X^*)$ is 
$X\otimes_\pi X^{**}$, or whether it admits any other explicit description. Hence, to make use of the projective duality and the weak operator topology we will assume $X$ to be reflexive.

The next theorem seems to be a new observation on the relationship between the implemented and the tensor product semigroups. We will use it to establish $\pi$--compatibility of the Lyapunov semigroups over Banach spaces. 
The proof makes essential use of the Grothendieck representation theorem for elements of 
$X\otimes_\pi X$ as sums $\sum_{n=1}^\infty a_n\,x_n\!\otimes y_n$ with bounded sequences $x_n, y_n\in X$, and a summable numerical sequence $a_n$ \cite[III.6.4]{Sch}. This theorem is valid even without the reflexivity assumption on $X$.
\begin{theorem}\label{C0pi} Let $T(t)$, $V(t)$ be $C_0$ semigroups on a Banach space $X$.
Then the tensor product semigroup $\T_*(t)(x\otimes y):=T(t)x\otimes V(t)y$ extends to a $C_0$ semigroup on 
$X\otimes_\pi X$, and the implemented semigroup $\T(t)P:=V^*(t)\,P\,T(t)$ is its adjoint $C_0^*$ semigroup on $\L(X,X^*)$. 
\end{theorem}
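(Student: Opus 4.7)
The strategy is to first establish that $\T_*(t)$ defines a bounded operator on the algebraic tensor product $X\otimes X$ in the $\pi$-norm, extend it by continuity to the completion $X\otimes_\pi X$, verify the semigroup law and strong continuity, and finally derive the implemented semigroup as its Banach-space adjoint via projective duality.

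For boundedness, I would use Definition \ref{pinorm} directly: for any representation $\rho=\sum_i x_i\otimes y_i$ of $\rho\in X\otimes X$,
$$
\pi\bigl(\T_*(t)\rho\bigr)\leq\sum_i\|T(t)x_i\|\,\|V(t)y_i\|\leq \|T(t)\|\,\|V(t)\|\,\sum_i\|x_i\|\,\|y_i\|,
$$
and taking the infimum over representations gives $\pi(\T_*(t)\rho)\leq\|T(t)\|\,\|V(t)\|\,\pi(\rho)$. Thus $\T_*(t)$ extends uniquely to a bounded operator on $X\otimes_\pi X$ of operator norm at most $\|T(t)\|\,\|V(t)\|$, which is locally bounded in $t$ by the standard exponential bound for $C_0$ semigroups. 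The identity $\T_*(t+s)=\T_*(t)\T_*(s)$ is immediate on monomials from the semigroup law for $T$ and $V$, extends to $X\otimes X$ by linearity, and then to $X\otimes_\pi X$ by the boundedness just established.

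The core step is strong continuity at $t=0$. On a single monomial,
$$
\pi\bigl(\T_*(t)(x\otimes y)-x\otimes y\bigr)\leq \|T(t)x\|\,\|V(t)y-y\|+\|T(t)x-x\|\,\|y\|\xrightarrow[t\to0+]{}0
$$
by strong continuity of $T$ and $V$ and local boundedness of $\|T(t)x\|$; by linearity this extends to every finite sum. For general $\rho\in X\otimes_\pi X$ I would invoke the Grothendieck representation $\rho=\sum_{n=1}^\infty a_n\,x_n\otimes y_n$ with $\|x_n\|,\|y_n\|\leq1$ and $\sum_n|a_n|<\infty$, together with the bound $\pi\bigl(\T_*(t)(x_n\otimes y_n)-x_n\otimes y_n\bigr)\leq\|T(t)\|\,\|V(t)\|+1$, uniform in $n$ for $t$ in a fixed compact interval. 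A dominated-convergence argument in the summable series then gives $\pi(\T_*(t)\rho-\rho)\to0$. Strong continuity at arbitrary $t\geq0$ follows by writing $\T_*(t+h)-\T_*(t)=\T_*(t)(\T_*(h)-I)$ for $h>0$ and using local uniform boundedness for $h<0$.

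For the adjoint statement, I would verify on monomials:
$$
\llangle P,\T_*(t)(x\otimes y)\rrangle=\langle P\,T(t)x,\,V(t)y\rangle=\langle V^*(t)P\,T(t)x,y\rangle=\llangle \T(t)P,\,x\otimes y\rrangle,
$$
which extends by linearity and density to all of $X\otimes_\pi X$. Under the projective duality $\L(X,X^*)\simeq(X\otimes_\pi X)^*$ this identifies $\T(t)$ with $\T_*(t)^*$, so $\T(t)$ is automatically $\sigma(\L(X,X^*),X\otimes_\pi X)$-continuous, i.e.\ a $C_0^*$ semigroup. The main obstacle I anticipate is precisely the strong continuity step: strong continuity on the dense subspace $X\otimes X$ is routine, but transferring it to the completion requires the Grothendieck representation to produce the summable dominating series, without which a bare $3\varepsilon$-approximation by finite tensors would stall.
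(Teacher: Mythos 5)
Your proposal is correct and follows essentially the same route as the paper: the infimum-over-representations bound $\pi(\T_*(t)\rho)\leq\|T(t)\|\,\|V(t)\|\,\pi(\rho)$, strong continuity first on monomials and then on general elements via the Grothendieck representation plus dominated convergence, and the adjoint identification on monomials extended by density under the projective duality. The only cosmetic difference is how you split the telescoping term $T(t)x\otimes V(t)y-x\otimes y$ for the monomial estimate, which is immaterial.
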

\begin{proof} Let $\rho=\sum_{i=1}^N x_i\otimes y_i\in X\otimes X$, then 
\begin{multline*}
\textstyle{\pi\Big(\T_*(t)\rho\Big)\leq\sum_{i=1}^N\pi\Big(T(t)x_i\otimes V(t)y_i\Big)}\\
\textstyle{=\sum_{i=1}^N\|T(t)x_i\|\,\|V(t)y_i\|\leq\|T(t)\|\,\|V(t)\|\sum_{i=1}^N\|x_i\|\,\|y_i\|\,.}
\end{multline*}
Passing to the infimum on both sides over all possible decompositions of $\rho$ we obtain the estimate
$\pi\big(\T_*(t)\rho\big)\leq\|T(t)\|\,\|V(t)\|\pi(\rho)$ for all $\rho\in X\otimes X$. Therefore, $\T_*(t)$ extends to a semigroup of bounded linear operators on the projective completion $X\otimes_\pi X$.

We prove strong continuity of $\T_*(t)$ on monomials first:
\begin{multline*}
\pi\Bigr(\T_*(t)(x\otimes y)-x\otimes y\Bigl)=\pi\Bigr(T(t)x\otimes V(t)y-x\otimes y\Bigl)\\
\leq\pi\Bigr(\bigr(T(t)x-x\bigl)\otimes V(t)y\Bigl)+\pi\Bigr(x\otimes\bigr(V(t)y-y\bigl)\Bigl)
=\|T(t)x-x\|\,\|V(t)\|\,\|y\|+\|V(t)y-y\|\,\|x\|\,.
\end{multline*}
Since $T(t)$, $V(t)$ are strongly continuous and $\|V(t)\|$ is locally bounded the last expression 
tends to $0$ when $t\to0$.

Now let $\rho\in X\otimes_\pi X$ be arbitrary. By the Grothendieck representation theorem \cite[III.6.4]{Sch}, 
$\rho=\sum_{n=1}^\infty a_n\,x_n\!\otimes y_n$, where $x_n,\,y_n$ are bounded and $\sum_{n=1}^\infty|a_n|<\infty$. 
It follows from the above estimate that
$$
\pi\Bigr(\T_*(t)\rho-\rho\Bigl)
\leq\sum_{n=1}^\infty|a_n|\,\Bigr(\|T(t)x_n-x_n\|\,\|V(t)\|\,\|y_n\|+\|V(t)y_n-y_n\|\,\|x_n\|\Bigl)\,.
$$
When $t\to0$ the two expressions in parentheses tend to $0$ for every $n$, they are also uniformly bounded since 
$x_n$, $y_n$ are bounded and $\|T(t)\|$, $\|V(t)\|$ are locally bounded. Since $a_n$ is summable the Lebesgue dominated convergence theorem implies that $\pi\Bigr(\T_*(t)\rho-\rho\Bigl)\xrightarrow[t\to0]{}0$. Thus, $\T_*(t)$ is strongly continuous on $X\otimes_\pi X$.

Finally, for $P\in\L(X,X^*)$ one has
\begin{multline*}
\llangle\T_*(t)^*P,x\otimes y\rrangle=\llangle P,\T_*(t)\bigl(x\otimes y\bigr)\rrangle
=\langle PT(t)x,V(t)y\rangle\\
=\langle V^*(t)PT(t)x,y\rangle=\llangle V^*(t)PT(t),x\otimes y\rrangle=\llangle\T(t)P,x\otimes y\rrangle\,.
\end{multline*}
Since $\T_*(t)^*$, $\T(t)$ are bounded and linear combinations of monomials are dense in $X\otimes_\pi X$ 
this implies $\T_*(t)^*=\T(t)$.
\end{proof}

Since $X$ is reflexive we identify $X$ and $X^{**}$ via the canonical embedding. Then the adjoint $P^*$ of 
$P\in\L(X,X^*)$ belong to the same space, and one can talk about symmetric and positive definite operators. 
\begin{definition} Let $X$ be a reflexive Banach space. An operator $P\in\L(X,X^*)$ is called symmetric if $P=P^*$ and positive definite if $\langle Px,x\rangle\geq0$ for all $x\in X$. The space of symmetric operators is denoted
$\L_s(X,X^*)$ and the cone of positive definite ones is denoted $\L_s^+(X,X^*)$. 
\end{definition}
\noindent Despite the analogy with Hilbert spaces the partial order on $\L_s(X,X^*)$ can behave quite differently. In particular, $\L_s^+(H)$ is generating and contains order units, while $\L_s^+(X,X^*)$ may not do either. Lin showed that a Banach space $X$ is isomorphic to a Hilbert space if and only if there is $P\in\L(X,X^*)$ satisfying $m\,\|x\|^2\leq\langle Px,x\rangle\leq M\,\|x\|^2$ for all $x\in X$ and some $m,M>0$ \cite[Thm.3]{Lin}. A corollary of his result is that for a reflexive $X$ any symmetric strictly positive definite $P\in\L_s^+(X,X^*)$ induces an isomorphism of $X$ with a Hilbert space, see Proposition 3.1 and Remark 3.2 in \cite{Dri}. On the other hand, if $P\in\L_s^+(X,X^*)$ is an order unit it will be strictly positive definite because $\langle Px,x\rangle=\llangle P,x\otimes x\rrangle>0$ for any $x\neq0$ since non-zero positive functionals are strictly positive on order units. Therefore, as long as $X$ is not isomorphic to a Hilbert space the cone $\L_s^+(X,X^*)$ does not contain order units. 

The authors of \cite{Kal} prove that $\L_s^+(X,X^*)$ is generating if and only if every $P\in\L_s(X,X^*)$ factors through a Hilbert space (by the reproducing kernel Hilbert space construction any $P\in\L_s^+(X,X^*)$ always so factors, see Section \ref{S5}). An example of non-factorizable $P\in\L_s(l_p,l_p^*)$ for $1<p<2$ is given in \cite{Sari}, hence 
$\L_s^+(l_p,l_p^*)$ is not generating for such $p$. Moreover, $\L_s^+(X,X^*)$ with any infinite-dimensional $X=L_p(\mu)$ and positive measure $\mu$ is not generating for $1<p<2$ because it contains a complemented copy of $l_p$, see 
\cite[Thm.3.3]{Kal}.

To use duality for $\L_s(X,X^*)$ we need symmetric tensor products rather than the general ones.
\begin{definition}\label{pinorm} Let $X$ be a Banach space. The symmetric algebraic tensor product $X\ohat X$ is the linear span in $X\otimes X$ of tensors of the form $x\otimes y+y\otimes x$, where $x,y\in X$. 
The {\sl symmetric projective tensor product $X\ohat_\pi X$} is the closure of $X\ohat X$ in $X\otimes_\pi X$.
\end{definition} 
\noindent The space $X\ohat_\pi X$ is canonically isomorphic to the Banach quotient of $X\otimes_\pi X$ by the subspace of antisymmetric tensors spanned by $x\otimes y-y\otimes x$. The projective duality along with standard isomorphisms for the dual spaces to subspaces and quotients \cite[4.8]{Rud} implies that $\L_s(X,X^*)\simeq(X\ohat_\pi X)^*$ as Banach spaces. Since elements of $\L_s(X,X^*)$ annihilate antisymmetric tensors $\s(\L_s(X,X^*),X\ohat X)$ coincides with the weak operator topology on $\L_s(X,X^*)$, and $\s(\L_s(X,X^*),X\ohat_\pi X)$ is the weak* topology. By inspection, Theorem \ref{C0pi} remains valid for restrictions of $\T(t)$ and $\T_*(t)$ to $\L_s(X,X^*)$ and $X\ohat_\pi X$ respectively. When $V(t)=T(t)$ the implemented semigroups restricted to $\L_s(X,X^*)$ turn out to be positive, indeed 
$\langle\bigl(\T(t)P\bigr)x,x\rangle=\langle PT(t)x,T(t)x\rangle\geq0$ for positive definite $P$.
\begin{definition} 
Let $X$ be a reflexive Banach space and $T(t)$ be a $C_0$ semigroup on it with the generator $A$. 
The positive semigroup $\T(t)P:=T^*(t)\,P\,T(t)$ on $\L_s(X,X^*)$ will be called the {\sl Lyapunov semigroup} of $T(t)$, and its generator $\A$ will be called the {\sl Lyapunov generator}.
\end{definition}
\noindent When $X$ is a Hilbert space $\T(t)$ is considered in \cite[I.3.16]{EN}, \cite{GN} and \cite[3.4]{Kuh0}. It 
is $C_0$ if only if $T(t)$ is uniformly continuous. The next theorem is the main result of this section.
\begin{theorem}\label{CbLyap} Let $X$ be a Banach space, $T(t)$ be a $C_0$ semigroup on it and $X\ohat_\pi X$ be its symmetric projective tensor product with itself. Then the positive cone $(X\ohat_\pi X)^+$ dual to $\L_s^+(X,X^*)$ is generating, and the Lyapunov semigroup $\T(t)$ is a $\pi$-compatible positive $C_b$ semigroup in the weak* topology.
\end{theorem}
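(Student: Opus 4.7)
The plan is to rely on Theorem \ref{C0pi} (applied with $V(t):=T(t)$ and restricted to the symmetric subspaces, as indicated in the paragraphs just before the statement) combined with the polarization identity. Positivity of $\T(t)$ is immediate: for $P\in\L_s^+(X,X^*)$,
$$
\langle(\T(t)P)x,x\rangle=\langle PT(t)x,T(t)x\rangle\geq 0.
$$
Since $\T(t)$ is the weak* adjoint of the $C_0$ semigroup $\T_*(t)$ on $X\ohat_\pi X$, the map $t\mapsto\llangle\T(t)P,\rho\rrangle=\llangle P,\T_*(t)\rho\rrangle$ is continuous for every $\rho$, giving the weak* continuity required in Definition \ref{Cb}. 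Locally uniform boundedness of $\T(t)$ in the weak* topology follows from the standard $C_0$ estimate $\pi(\T_*(t)\rho)\leq M_a\,\pi(\rho)$ on $[0,a]$, since then $\sup_{t\in[0,a]}|\llangle\T(t)P,\rho\rrangle|\leq M_a\,\|P\|\,\pi(\rho)$, and weakly* bounded subsets of $\L_s(X,X^*)$ are norm bounded by the Banach-Steinhaus theorem.

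For the generating-cone claim I would combine the Grothendieck representation with polarization. Any $\rho\in X\ohat_\pi X\subset X\otimes_\pi X$ admits a Grothendieck representation $\rho=\sum_n a_n\,x_n\otimes y_n$ with $\sum|a_n|<\infty$ and $\|x_n\|,\|y_n\|$ bounded. Symmetry of $\rho$ lets one replace this by the symmetrized series $\rho=\sum_n\tfrac{a_n}{2}(x_n\otimes y_n+y_n\otimes x_n)$, and the identity
$$
x\otimes y+y\otimes x=\tfrac12\bigl((x+y)\otimes(x+y)-(x-y)\otimes(x-y)\bigr)
$$
rewrites each term via tensors $u\otimes u$, which lie in $(X\ohat_\pi X)^+$ because $\llangle P,u\otimes u\rrangle=\langle Pu,u\rangle\geq 0$. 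Splitting $a_n=a_n^+-a_n^-$ and regrouping one displays $\rho=\rho^+-\rho^-$ with each $\rho^\pm$ a projectively convergent series $\sum_n c_n\,u_n^{\otimes 2}$ with $c_n\geq 0$; convergence follows from $\pi(u_n^{\otimes 2})=\|u_n\|^2\leq(\|x_n\|+\|y_n\|)^2$ and the summability of $a_n$. Since $(X\ohat_\pi X)^+$ is an intersection of closed half-spaces it is norm closed and contains these limits, so $\rho^\pm\in(X\ohat_\pi X)^+$.

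Finally, for $\pi$-compatibility: in the duality between $\L_s(X,X^*)$ with the weak* topology and $X\ohat_\pi X$ the locally convex adjoint $\T'(t)$ coincides with $\T_*(t)$, and the symmetric version of Theorem \ref{C0pi} gives that $t\mapsto\pi(\T_*(t)\pfi)$ is continuous for every $\pfi\in X\ohat_\pi X$, a fortiori on $(X\ohat_\pi X)^+$. I expect the main obstacle to be the generating-cone step: one must verify carefully that the polarized series converges in the projective norm, so that each $\rho^\pm$ actually lies in $X\ohat_\pi X$ rather than in some formal extension, and that the cone is norm closed under such limits. The Grothendieck representation makes both points routine once invoked, but it is the single nontrivial ingredient that is truly specific to the projective symmetric tensor product.
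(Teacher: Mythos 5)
Your proposal is correct and follows essentially the same route as the paper: positivity and weak* continuity from the (symmetric restriction of the) tensor-product duality in Theorem \ref{C0pi}, locally uniform boundedness from the standard norm estimate, the Grothendieck representation plus polarization to split $\rho=\rho^+-\rho^-$ into positive squares, and $\pi$-compatibility from strong continuity of the pre-dual semigroup $\T_*(t)$. Your version is in fact slightly more explicit than the paper's at the two points it compresses --- writing out the polarization identity and series convergence, and noting that weak*-bounded sets are norm bounded via Banach--Steinhaus.
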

\begin{proof} By the Grothendieck representation theorem \cite[III.6.4]{Sch}, any $\rho\in X\otimes_\pi X$
is of the form $\rho=\sum_{n=1}^\infty a_n\,x_n\!\otimes y_n$, where $x_n,y_n\in X$ are bounded sequences and the numbers $a_n$ satisfy $\sum_{n=1}^\infty|a_n|<\infty$. By polarization, for $\rho\in X\ohat_\pi X$ one can take 
$y_n=x_n$. Set $\rho^+:=\sum_{a_n\geq0} a_n\,x_n\!\otimes x_n$, $\rho^-:=-\sum_{a_n\leq0}a_n\,x_n\!\otimes x_n$, then
$\rho^\pm\in(X\ohat_\pi X)^+$ and $\rho=\rho^+-\rho^-$, so the positive cone is generating.

The weak* continuity is a part of Theorem \ref{C0pi}, the locally uniform boundedness follows from the norm estimate 
$\|\T(t)P\|=\|T^*(t)PT(t)\|\leq\|T(t)\|^2\|P\|$. Also by Theorem \ref{C0pi} the adjoint semigroup $\T'(t)=\T_*(t)$ is a 
$C_0$ semigroup on $X\ohat_\pi X$, so $t\mapsto\pi(\T_*(t)\rho)$ is continuous. Thus, $\T(t)$ is $\pi$-compatible.
\end{proof}
Note that we did not assume that $X$ is reflexive in Theorem \ref{CbLyap}, this assumption is needed to apply it to the Lyapunov equations. By a result of Freeman \cite[Thm.4]{Fr} the Lyapunov generator in the weak operator topology has the domain
$$
\D_{\A}=\Big\{P\in\L_s(X,X^*)\,\Big|\,P\D_A\subseteq\D_{A^*}\,,\|A^*P+PA\|<\infty\Big\}\,,
$$
and is given explicitly by $\A P=\overline{A^*P+PA}$, where the bar denotes the unique extension from $\D_{A}$ to $X$. Since the generator is determined by convergence of sequences and the weak* topology coincides with the weak operator topology on convergent sequences, the weak* generator is the same.

\section{Lyapunov equations in control theory}\label{S5}

In this section we interpret the notions of stability, detectors and observers for the Lyapunov semigroups over  reflexive Banach spaces, and apply to them the abstract theorems of Section \ref{S3}. This leads in particular to a generalization of Wohnam's Lyapunov theorem for the Lyapunov equations with detectable right hand sides. We also derive some practical tests to verify detectability.

For the dual pair $(\L_s(X,X^*),X\ohat X)$ weak $L^1$ stability amounts to:
\begin{equation}\label{OpWstab}
\int_0^\infty\llangle x\otimes x,\T(t)P\rrangle\,dt=\int_0^\infty\langle T^*(t)\,P\,T(t)x,x\rangle\,dt
=\int_0^\infty\langle PT(t)x,T(t)x\rangle\,dt<\infty
\end{equation}
for all $x\in X$ and $P\in \L_s^+(X,X^*)$. If $X=X^*=H$ is a Hilbert space it suffices that \eqref{OpWstab}
holds with $P=I$ since $I$ is an order unit, reducing \eqref{OpWstab} to $\int_0^\infty\|T(t)x\|^2\,dt<\infty$. By the Datko-Pazy theorem \cite[II.1.2.2]{Ben}, \cite[Prop.9.4]{Clem} this is equivalent to $T(t)$ being exponentially stable. In the reflexive Banach case there is no analog of $I$ since $X$, $X^*$ are not isomorphic, and the requirement that \eqref{OpWstab} holds for all positive definite $P$ is hard to verify. It is only obvious that it is stronger than weak $L^2$ stability of $T(t)$ which one gets by taking $P$ of the form $Px:=\langle\pfi,x\rangle\pfi$ with $\pfi\in X^*$.

Weak $L^1$ detectors are interpreted analogously, $Q\in \L_s^+(X,X^*)$ is one if for every $x\in X$\,:
\begin{equation}\label{OpWdet}
\int_0^\infty\langle QT(t)x,T(t)x\rangle\,dt<\infty
\implies\int_0^\infty\langle PT(t)x,T(t)x\rangle\,dt<\infty\text{ for all } P\in \L_s^+(X,X^*)\,.
\end{equation}
Again, for Hilbert spaces $P=I$ suffices and \eqref{OpWdet} can be rewritten as 
\begin{equation}\label{OpWdetH}
\int_0^\infty\langle\|Q^{\frac12}T(t)x\|^2\,dt<\infty
\implies\int_0^\infty\|T(t)x\|^2\,dt<\infty\text{ for all } P\in \L_s^+(X,X^*)\,.
\end{equation}
If $Q=I$ this is vacuously satisfied, so every $C_0$ semigroup on a Hilbert space admits a detector. In reflexive Banach spaces existence of detectors for every Lyapunov semigroup is an open question. 

As one can see, weak $L^1$ concepts are technically inconvenient beyond Hilbert spaces. We shall see that $\pi$-stability and $\pi$-detectors provide a better framework when $\pi$ is the projective tensor norm. We now interpret the $\pi$--norm induced concepts for the Lyapunov semigroups $\T(t)$. Recall that $\T(t)$ is $L^1$ $\pi$-stable if $\int_0^\infty\pi\bigr(\T_*(t)\rho\bigl)\,dt<\infty$. Taking $\rho=x\otimes x$ we see that the cone $\F:=(X\ohat X)^+$ is a $\T_*$--invariant stability subset of $(X\ohat_\pi X)^+$ for any Lyapunov semigroup since $\T_*(t)(x\otimes x)=T(t)x\otimes T(t)x$. Suppose $\T(t)$ is $L_1$ $\pi$--stable on $(X\ohat X)^+$, then
$$
\int_0^\infty\pi\bigr(\T_*(t)(x\otimes x)\bigl)\,dt=\int_0^\infty\pi\Bigr(T(t)x\otimes T(t)x\Bigl)\,dt
=\int_0^\infty\|T(t)x\|^2\,dt<\infty
$$
for all $x\in X$. By the Datko-Pazy theorem $T(t)$ is exponentially stable in the Banach norm, and hence $\T(t)$ is $L_1$ $\pi$--stable. This also shows that $L^1$ $\pi$-stability is equivalent to the exponential stability for the Lyapunov semigroups. 

Recall that an operator $Q\in\L_s^+(X,X^*)$ is an $L^1$ $\pi$-detector relative to $(X\ohat X)^+$ if 
for all $\rho\in(X\ohat X)^+$:
$$
\int_0^\infty\llangle\rho,\T(t)Q\rrangle\,dt<\infty\implies\int_0^\infty\pi\bigr(\T_*(t)\rho\bigl)\,dt<\infty\,.
$$
This condition holds if and only if it holds on monomials giving
$$
\int_0^\infty\langle QT(t)x,T(t)x\rangle\,dt<\infty\implies\int_0^\infty\|T(t)x\|^2\,dt<\infty\,.
$$
To make the last condition look more recognizable let us assume that there is a Hilbert space $\mathcal{N}$ and a bounded linear operator 
$C:X\to\mathcal{N}$ such that $Q=C^*C$. Then the last condition turns into
$$
\int_0^\infty\|CT(t)x\|^2\,dt<\infty\implies\int_0^\infty\|T(t)x\|^2\,dt<\infty\,,
$$
which is more suggestive from the control theory point of view. If $X$ is finite dimensional one can equivalently ask that $CT(t)x\to0$ imply $T(t)x\to0$ when $t\to\infty$ for all $x\in X$, which is a standard definition 
of detectability \cite[3.6]{Wh}. We are led to the following infinite dimensional generalization, cf. \cite[II.1.2.2]{Ben}.
\begin{definition}\label{detL2} Let $X$ be a reflexive Banach space and $T(t)$ be a $C_0$ semigroup on it 
with the generator $A$. Let $\mathcal{N}$ be a Hilbert space and $C:X\to\mathcal{N}$ be a bounded operator. We say that the pair {\sl $(C,A)$ is detectable in $L^2$} if for all $x\in X$ 
$$
\int_0^\infty\|CT(t)x\|^2\,dt<\infty\implies\int_0^\infty\|T(t)x\|^2\,dt<\infty\,.
$$
\end{definition}
\noindent We now turn to the final $\pi$-observers at time $t_0$. Definition \ref{dpiobs} with $\F=(X\ohat X)^+$ requires existence of an $\e>0$ such that for all $\rho\in(X\ohat X)^+$ we have 
$\int_0^{t_0}\llangle\rho,\T(t)Q\rrangle\,dt\geq\e\,\pi\bigr(\T_*(t)\rho\bigl)$. For $Q=C^*C$ as above this reduces to 
the usual continuous final observability \cite[3.23]{CR}.
\begin{definition}
Let $X$ be a reflexive Banach space and $T(t)$ be a $C_0$ semigroup on it 
with the generator $A$. Let $\mathcal{N}$ be a Hilbert space and $C:X\to\mathcal{N}$ be a bounded operator. 
We say that the pair {\sl $(C,A)$ is continuously finally observable on $[0,t_0]$} if there exists an $\e>0$ 
such that for all $x\in X$
$$
\int_0^{t_0}\|CT(t)x\|^2\,dt\geq\e\,\|T(t_0)x\|^2\,.
$$
\end{definition}
Having $Q$ represented as $Q=C^*C$ does not restrict generality. Given $Q\in\L_s^+(X,X^*)$ define an inner product on $\Ran Q\subseteq X^*$ by $(Qx,Qy):=\langle Qx,y\rangle$, it is well defined since $Q^*=Q$. Denote by 
$H_Q$ the completion of $\Ran Q$ in the inner product norm. There is a natural embedding $i_Q:H_Q\to X^*$ and one easily verifies that $Q=i_Qi_Q^*$. Therefore, if we set $\mathcal{N}=H_Q$ and $C=i_Q^*$ then $Q=C^*C$. The space $H_Q$ is known as the {\sl reproducing kernel Hilbert space} of $Q$, and the construction of $C$ replaces the positive square root construction in Hilbert spaces. Note that this shows in particular that any $Q\in\L_s^+(X,X^*)$ factors through a Hilbert space. 
When $X$ is a Hilbert space identified with its dual, $i_Q$ can be identified with $
Q^{\frac12}$ and $H_Q$ can be identified with $\Ran Q^{\frac12}$. Reproducing kernel Hilbert spaces were used to study the Lyapunov equations in \cite{GvN,vN2}. We now state our main result on the Lyapunov equations over reflexive Banach spaces.
\begin{theorem}\label{Wonh} Let $X$ be a reflexive Banach space and $T(t)$ be a $C_0$ semigroup on it 
with the generator $A$. Let $\mathcal{N}$ be a Hilbert space and $C:X\to\mathcal{N}$ be a bounded operator such that the pair $(C,A)$ is detectable in $L^2$. Then the following conditions are equivalent:

{\rm(i)} $\overline{A^*P+PA}=-C^*C$ has a positive definite solution $P\in\L_s(X,X^*)^+$ such that 
$P(\D_A)\subseteq\D_{A^*}$ (the bar stands for the unique extension from $\D_A$ to $X$);

{\rm(ii)} $T(t)$ is exponentially stable;

{\rm(iii)} Lyapunov generator $\A P:=\overline{A^*P+PA}$ has a bounded inverse on 
$\L_s(X,X^*)$\\ and $-\A^{-1}\geq0$.
\end{theorem}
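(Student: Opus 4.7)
The plan is to apply Theorem~\ref{Lyappi} to the Lyapunov semigroup $\T(t)P=T^*(t)PT(t)$ on the norming dual pair
$\X=\L_s(X,X^*)$, $\X'=X\ohat_\pi X$, equipped with the weak* topology, taking $z=C^*C\in\L_s^+(X,X^*)$ and $\F=(X\ohat X)^+$. The abstract analytic hypotheses are already in place: by Theorem~\ref{CbLyap}, $\T(t)$ is a positive, $\pi$-compatible $C_b$ semigroup in the weak* topology and $(X\ohat_\pi X)^+$ is generating. Freeman's description of $\D_\A$ recalled after Theorem~\ref{CbLyap} identifies the abstract equation $\A P=-z$ with the concrete Lyapunov equation appearing in (i), and (iii) matches (iii) of Theorem~\ref{Lyappi} verbatim, so the only real work is the verification of the two structural assumptions on $\F$ and $z$.

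First I would check that $\F=(X\ohat X)^+$ is a $\T_*$-invariant $L^1$ $\pi$-stability subspace. Invariance is immediate from $\T_*(t)(x\otimes x)=T(t)x\otimes T(t)x$. For the stability property, suppose $\int_0^\infty\pi(\T_*(t)\rho)\,dt<\infty$ for every $\rho\in\F$. Applied to monomials this reads $\int_0^\infty\|T(t)x\|^2\,dt<\infty$ for all $x\in X$, and the Datko--Pazy theorem then yields exponential stability of $T(t)$ in the Banach norm. Since $\pi(\T_*(t)\rho)\leq\|T(t)\|^2\,\pi(\rho)$ extends from monomials to all of $X\ohat_\pi X$, this upgrades to exponential $\pi$-stability of $\T_*(t)$, which integrates to $L^1$ $\pi$-stability on all of $(X\ohat_\pi X)^+$.

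Next I would verify that $z=C^*C$ is an $L^1$ $\pi$-detector relative to $\F$. As observed in Section~\ref{S5}, the defining implication for a general $\rho=\sum_i a_i\,x_i\otimes x_i\in\F$ with $a_i\geq 0$ reduces, by positivity of each summand, to the monomial case
\[
\int_0^\infty\|CT(t)x\|^2\,dt<\infty\implies\int_0^\infty\|T(t)x\|^2\,dt<\infty,
\]
which is precisely the $L^2$-detectability of $(C,A)$ assumed in the hypothesis.

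With both preconditions verified, Theorem~\ref{Lyappi} yields the equivalence of (i), of exponential $\pi$-stability of $\T(t)$, and of (iii). To close the loop I would identify exponential $\pi$-stability of $\T(t)$ with exponential stability of $T(t)$ in (ii): one direction follows from $\|\T(t)P\|\leq\|T(t)\|^2\,\|P\|$, the other from $L^1$ $\pi$-stability of $\T(t)$ (a consequence of exponential $\pi$-stability) tested on monomials of $\X'^+$ combined again with Datko--Pazy. The main obstacle is not any single step but the careful bookkeeping of the dualities and the reduction-to-monomials argument verifying the detector condition; once that dictionary is in place the theorem reads off Theorem~\ref{Lyappi}.
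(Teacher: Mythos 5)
Your proposal is correct and follows essentially the same route as the paper: apply Theorem~\ref{Lyappi} to the Lyapunov semigroup on the norming dual pair $\L_s(X,X^*)$, $X\ohat_\pi X$ with $z=C^*C$ and $\F=(X\ohat X)^+$, using Theorem~\ref{CbLyap} for the analytic hypotheses, the monomial computation $\pi(\T_*(t)(x\otimes x))=\|T(t)x\|^2$ plus Datko--Pazy for the stability-subset and detector verifications, and Freeman's description of $\D_\A$ to match (i). The identification of exponential $\pi$-stability of $\T(t)$ with exponential stability of $T(t)$ is also exactly the paper's closing step.
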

\begin{proof} By Theorem \ref{CbLyap} with $\X=\L_s(X,X^*)$, $\T(t)$, $z=C^*C$ and $\F=(X\ohat X)^+$ satisfy the conditions of Theorem \ref{Lyappi}. We see that Theorem \ref{Lyappi}(i) is equivalent to the (i) here since $\pi$ is the induced operator norm on $\L_s(X,X^*)$. Claims (ii),(iii) of Theorems \ref{Lyappi} are equivalent to (ii),(iii) here as well, and it remains to note that exponential stability of $\T(t)$ is equivalent to that of $T(t)$.
\end{proof}
\noindent This is an infinite-dimensional generalization of a theorem of Wonham \cite[12.4]{Wh}, 
the Hilbert space version is due to Zabczyk \cite[Lem.3]{Zb0}, see also \cite[Thm.I.1.2.4]{Ben}, \cite[4.2]{GN} when $C^*C$ is the identity operator. The result appears to be new for Banach spaces. In \cite[Thm.4.4]{GvN} the Lyapunov equation is considered in a general Banach setting, but it is only proved that (i) is equivalent to $\int_0^t\T(t)(C^*C)\,dt$ having a weak limit when $t\to\infty$. We were able to go further by exploiting the projective duality and compatibility with the projective tensor norm. 

Of course, the theorem is only useful if one has a way of verifying the detectability condition. 
The next $L^2$ detectability test is a direct consequence of Theorem \ref{piobs}, and the continuous final observability can often be established in applications via a priori estimates, see Example 3.27 and references to Chapter 3 in \cite{CR}.
\begin{theorem}\label{Cobs} Let $X$ be a reflexive Banach space and $A$ be the generator of a positive $C_0$ semigroup on it. Let $\mathcal{N}$ be a Hilbert space and $C:X\to\mathcal{N}$ be a bounded operator. If the pair $(C,A)$ 
is continuously finally observable at time $t_0>0$ then it is detectable in $L^2$. 
\end{theorem}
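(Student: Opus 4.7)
The plan is to reduce this directly to Theorem \ref{piobs} applied to the Lyapunov semigroup $\T(t)$ of $T(t)$, by showing that continuous final observability of $(C,A)$ at $t_0$ translates verbatim into the statement that $Q:=C^*C$ is a final $\pi$-observer, and that $L^1$ $\pi$-detection then translates verbatim back into $L^2$ detectability. By Theorem \ref{CbLyap}, on the norming dual pair $\X=\L_s(X,X^*)$, $\X'=X\ohat_\pi X$ the Lyapunov semigroup is a positive, $\pi$-compatible $C_b$ semigroup in the weak* topology, so the hypotheses of Theorem \ref{piobs} are in place.

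First I would take $\F:=(X\ohat X)^+$, which is contained in $(X\ohat_\pi X)^+=\X'^+$ and is $\T_*$-invariant because $\T_*(t)(x\otimes x)=T(t)x\otimes T(t)x$, as noted at the start of Section \ref{S5}. Next I would verify that $Q=C^*C$ is a final $\pi$-observer at $t_0$ relative to $\F$. For a monomial $\rho=x\otimes x$ one has
$$
\llangle\rho,\T(t)Q\rrangle=\langle T^*(t)\,C^*C\,T(t)x,x\rangle=\|CT(t)x\|^2
\quad\text{and}\quad
\pi\bigl(\T_*(t_0)\rho\bigr)=\pi\bigl(T(t_0)x\otimes T(t_0)x\bigr)=\|T(t_0)x\|^2,
$$
so the continuous final observability inequality $\int_0^{t_0}\|CT(t)x\|^2\,dt\geq\e\,\|T(t_0)x\|^2$ is exactly the observer bound on monomials. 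For a general $\rho=\sum_i c_i\,x_i\otimes x_i\in\F$ with $c_i\geq0$ the same inequality follows by linearity on the left and by subadditivity of $\pi$ on the right, since $\pi\bigl(\T_*(t_0)\rho\bigr)\leq\sum_i c_i\|T(t_0)x_i\|^2$.

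Applying Theorem \ref{piobs} then yields that $Q$ is an $L^1$ $\pi$-detector relative to $\F$. Specializing the detector implication to $\rho=x\otimes x$ gives, for every $x\in X$,
$$
\int_0^\infty\|CT(t)x\|^2\,dt<\infty\;\Longrightarrow\;\int_0^\infty\pi\bigl(T(t)x\otimes T(t)x\bigr)\,dt=\int_0^\infty\|T(t)x\|^2\,dt<\infty,
$$
which is precisely the definition of $L^2$ detectability of $(C,A)$ in Definition \ref{detL2}. The main obstacle is essentially bookkeeping: making sure the bilinear/pairing identifications between $\L_s(X,X^*)$ and $X\ohat_\pi X$ are consistently used and that $\F$ satisfies both the inclusion and invariance needed by Theorem \ref{piobs}; no further analytic work is required because all the heavy lifting is already contained in Theorems \ref{CbLyap} and \ref{piobs}.
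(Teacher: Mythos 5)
Your proposal is correct and follows essentially the same route as the paper: the paper states Theorem \ref{Cobs} as a direct consequence of Theorem \ref{piobs}, using exactly the identifications you spell out — $\F=(X\ohat X)^+$ is a $\T_*$-invariant subset, continuous final observability of $(C,A)$ at $t_0$ is the final $\pi$-observer condition for $Q=C^*C$ (via $\llangle x\otimes x,\T(t)Q\rrangle=\|CT(t)x\|^2$ and $\pi(T(t_0)x\otimes T(t_0)x)=\|T(t_0)x\|^2$), and $L^1$ $\pi$-detection on monomials is precisely Definition \ref{detL2}. Your extension of the observer inequality from monomials to general elements of $(X\ohat X)^+$ by linearity and subadditivity of $\pi$ is the same (implicit) step the paper takes in Section \ref{S5}, so nothing is missing.
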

\noindent Finally, we give another sufficient condition for $L^2$ detectability specific to the Lyapunov semigroups, the one used by Wonham in the finite dimensional case, and by Zabczyk in the Hilbert case. 
We need a definition first.
\begin{definition} In the notation of Definition \ref{detL2} we say that a pair {\sl $(C,A)$ is exponentially detectable} if there exists a bounded operator $F:\mathcal{N}\to X$ such that $A-FC$ generates an exponentially stable $C_0$ semigroup.
\end{definition}
\begin{theorem}\label{ExpDet} If $(C,A)$ is exponentially detectable then it is detectable in $L^2$.
\end{theorem}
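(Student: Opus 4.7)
The plan is to deduce $L^2$ detectability from a variation of constants argument, treating the bounded perturbation $FC$ of $A-FC$ as the bridge between the semigroups generated by $A$ and by $A-FC$. Denote by $T(t)$ the semigroup generated by $A$, and by $S(t)$ the exponentially stable $C_0$ semigroup generated by $A-FC$; since $F$ and $C$ are bounded, $FC\in\L(X)$, so standard bounded perturbation theory for $C_0$ semigroups applies, and by exponential stability there are $M,\omega>0$ with $\|S(t)\|\leq Me^{-\omega t}$.

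First I would invoke the bounded perturbation formula: writing $A=(A-FC)+FC$, one has for every $x\in X$
$$
T(t)x=S(t)x+\int_0^t S(t-s)\,F\,C\,T(s)x\,ds,
$$
with the integral understood in the Bochner sense. Taking norms and using $\|S(t-s)\|\leq Me^{-\omega(t-s)}$ gives
$$
\|T(t)x\|\leq Me^{-\omega t}\|x\|+M\|F\|\int_0^t e^{-\omega(t-s)}\|CT(s)x\|\,ds.
$$

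Next I would square and integrate over $t\in[0,\infty)$. From $(a+b)^2\leq 2a^2+2b^2$ the first term contributes at most $2M^2\|x\|^2\int_0^\infty e^{-2\omega t}\,dt=\frac{M^2}{\omega}\|x\|^2$. For the second term, apply Cauchy--Schwarz to the convolution against $e^{-\omega(t-s)}$:
$$
\left(\int_0^t e^{-\omega(t-s)}\|CT(s)x\|\,ds\right)^{\!2}\leq\frac{1}{\omega}\int_0^t e^{-\omega(t-s)}\|CT(s)x\|^2\,ds,
$$
and then use Fubini to swap the order of integration, giving
$$
\int_0^\infty\int_0^t e^{-\omega(t-s)}\|CT(s)x\|^2\,ds\,dt=\int_0^\infty\|CT(s)x\|^2\int_s^\infty e^{-\omega(t-s)}\,dt\,ds=\frac{1}{\omega}\int_0^\infty\|CT(s)x\|^2\,ds.
$$
Combining, $\int_0^\infty\|T(t)x\|^2\,dt\leq\frac{M^2}{\omega}\|x\|^2+\frac{2M^2\|F\|^2}{\omega^2}\int_0^\infty\|CT(s)x\|^2\,ds$, so finiteness of the right-hand side integral forces finiteness of the left, which is exactly $L^2$ detectability.

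There is no real obstacle here beyond bookkeeping; the only subtlety is justifying the perturbation formula when $x\notin\D_A$, which is handled by the boundedness of $FC$ (so that the integrand is strongly continuous and the formula extends from $\D_A$ to all of $X$ by density and uniform operator estimates). Equivalently, one may replace Cauchy--Schwarz by Young's convolution inequality applied to $e^{-\omega\cdot}\in L^1(0,\infty)$ and $\|CT(\cdot)x\|\in L^2(0,\infty)$, yielding the same conclusion.
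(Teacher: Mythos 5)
Your proof is correct and follows essentially the same route as the paper's: the variation of constants formula $T(t)x=S(t)x+\int_0^t S(t-s)FC\,T(s)x\,ds$ followed by a convolution estimate (the paper invokes Young's inequality with $p=1$, $q=r=2$, which is exactly your Cauchy--Schwarz-plus-Fubini computation). If anything your write-up is cleaner, since the paper's displayed formulas contain typographical slips (e.g.\ $T_{A-FC}(s)x$ where $T(s)x$ is meant inside the integral), whereas your version states the perturbation identity and the final quantitative bound correctly.
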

\begin{proof} The proof follows the idea from \cite[Lem.3]{Zb0}, see also \cite[Thm.I.1.2.6]{Ben}. We have to show that for any $x\in X$
$$
\int_0^\infty\|CT(t)x\|^2\,dt<\infty
\implies\int_0^\infty\|T(t)x\|^2\,dt<\infty\,, 
$$
where $T(t)$ is the semigroup generated by $A$. By exponential detectability, there is a bounded $F$ such that $A-FC$ generates an exponentially stable $C_0$ semigroup $T_{A-FC}(t)$. Since $A=(A-FC)+FC$ we have by the variation of parameters formula
$$
T(t)x=T_{A-FC}(t)x+\int_0^tT_{A-FC}(t-s)\,FC\,T_{A-FC}(s)x\,ds\,.
$$
Passing to norms,
$$
\|T(t)x\|\leq\|T_{A-FC}(t)x\|+\|F\|\int_0^t\|T_{A-FC}(t-s)\|\,\|C\,T_{A-FC}(s)x\|\,ds<\infty\,.
$$
The last term is a convolution of scalar functions supported on $[0,\infty)$. By the Young inequality for 
convolutions,  $\|f*g\|_{L_r}\leq\|f\|_{L_p}\,\|g\|_{L_q}$ whenever $\frac1p+\frac1q=1+\frac1r$, see \cite[IX.4]{RS2}.
Setting $f(s):=\|T_{A-FC}(s)\|$, $g(s):=\|C\,T_{A-FC}(s)x\|$ and applying the Young inequality with $p=1$ and 
$q=r=2$ we obtain
\begin{multline}
\Bigl(\int_0^\infty\|T(t)x\|^2\,dt\Bigr)^{\frac12}\leq
\Bigl(\int_0^\infty\|T_{A-FC}(t)x\|^2\,dt\Bigr)^{\frac12}\\
+\|F\|\int_0^\infty\|T_{A-FC}(t)x\|^2\,dt\,\Bigl(\int_0^\infty\|CT(t)x\|^2\,dt\Bigr)^{\frac12}\,.
\end{multline}
Since $T_{A-FC}(t)$ is exponentially stable and $\int_0^\infty\|CT(t)x\|^2\,dt<\infty$ by assumption we conclude that 
$\int_0^\infty\|T(t)x\|^2\,dt<\infty$, and hence $(C,A)$ is detectable in $L^2$.
\end{proof}

{\em Acknowledgements:} The author is grateful to his former advisor Yu.V. Bogdansky for introducing him to the subject of one-parameter semigroups and discussing many ideas of this work.


\begin{thebibliography}{99}

\bibitem{Alb} A. Albanese, F. K\"uhnemund, Trotter-Kato approximation theorems for locally equicontinuous semigroups,
Rivista di Mathematica della Universit\`a de Parma, 1 (2002) 19-53.

\bibitem{Ben} A. Bensoussan, G. Da Prato, M. Delfour, S. Mitter, Representation and control of infinite dimensional systems, second edition, Birkh\"auser, Boston, 2007.

\bibitem{BR} O. Bratteli, D. Robinson, Operator algebras and quantum statistical mechanics 1, Texts and Monographs in Physics, Springer-Verlag, New York, 1987.

\bibitem{Clem} P. Cl\'ement et al., One-parameter semigroups, CWI Monographs 5, North-Holland Publishing Co., 
Amsterdam, 1987.

\bibitem{CR} R. Curtain, A. Pritchard, Infinite dimensional linear systems theory, Lecture Notes in Control and Information Sciences, vol. 8, Springer-Verlag, Berlin-New York, 1978. 

\bibitem{Dat} R. Datko, Extending a theorem of A. M. Liapunov to Hilbert space, 
J. Math. Anal. Appl. 32 (1970) 610--616.

\bibitem{Dri} D. Drivaliaris, N. Yannakakis, Hilbert space structure and positive operators, J. Math. Anal. Appl., 
305 (2005), no. 2, 560--565.

\bibitem{EFNS} T. Eisner, B. Farkas, R. Nagel, A. Ser\'eny, Weakly and almost weakly stable $C_0$--semigroups. International Journal of Dynamical Systems and Differential Equations, 1 (2007), no. 1, 44--57.

\bibitem{EN} K.-J. Engel, R. Nagel, One-parameter semigroups for linear evolution equations. Graduate Texts in Mathematics 194, Springer-Verlag, New York, 2000. 

\bibitem{Fr} J. Freeman, The tensor product of semigroups and the operator equation $SX-XT=A$, J. Math. Mech. 19 (1969/1970), 819--828.

\bibitem{GvN} B. Goldys, J. van Neerven, Transition semigroups of Banach space-valued Ornstein-Uhlenbeck processes, 
Acta Appl. Math. 76 (2003), no. 3, 283--330.

\bibitem{GN} U. Groh, F. Neubrander, Stabilit\"at starkstetiger, positiver Operatorhalbgruppen auf $C^*$--Algebren. Math. Ann. 256 (1981), no. 4, 509--516. 

\bibitem{Kal} N. Kalton, S. Konyagin, L. Vesel\'y, Delta-semidefinite and delta-convex quadratic forms in Banach spaces, 
Positivity, 12 (2008), no. 2, 221--240. 

\bibitem{K} S. Koshkin, Concave equations in Banach cones, Appl. Anal. 80 (2001), no. 3--4, 449--475. 

\bibitem{Kuh0} F. K\"uhnemund, Bi-continuous semigroups on spaces with two topologies: theory and applications, Dissertation Universit\"at T\"ubingen (2001), available at \texttt{http://tobias-lib.uni-tuebingen.de/volltexte/2001/236}

\bibitem{Kuh} F. K\"uhnemund, A Hille-Yosida theorem for bi-continuous semigroups, Semigroup Forum 67 (2003), no. 2, 205--225. 

\bibitem{Kun0} M. Kunze, Continuity and equicontinuity of semigroups on norming dual pairs,
Semigroup Forum 79 (2009), no. 3, 540--560. 

\bibitem{Kun} M. Kunze, Pettis-type integral and applications to transition semigroups, Czechoslovak Math. J. 61 (136) (2011), no. 2, 437--459.

\bibitem{Lan} P. Lancaster, Theory of matrices, Academic Press, New York, 1969. 

\bibitem{Lin} B. Lin, On Banach spaces isomorphic to its conjugate, in Studies and Essays (presented to Yu-why Chen on his 60th birthday, April 1, 1970), National Taiwan University, Taipei, 1970, pp. 151--156. 

\bibitem{Meg} M. Megan, On stability of controlled systems in Banach spaces, 
Glas. Mat. Ser. III, 18(38) (1983), no. 1, 187--201.

\bibitem{vN1} J. van Neerven, Exponential stability of operators and operator semigroups, 
J. Funct. Anal. 130 (1995), no. 2, 293--309.

\bibitem{vNSW} J. van Neerven, B. Straub, L. Weis, On the asymptotic behaviour of a semigroup of linear operators, 
Indagationes Mathematicae 6 (1995), no. 4, 453--476. 

\bibitem{vN2} J. van Neerven, Null controllability and the algebraic Riccati equation in Banach spaces, OSIAM J. Control Optim. 43 (2004/05), no. 4, 1313--1327.

\bibitem{PH} V. Phat, T. Kiet, On the Lyapunov equation in Banach spaces and applications to control problems, Int. J. Math. Math. Sci. 29 (2002), no. 3, 155--166. 

\bibitem{RS} M. Reed, B. Simon, Methods of modern mathematical physics I: Functional analysis, Academic Press, New York, 1972.

\bibitem{RS2} M. Reed, B. Simon, Methods of modern mathematical physics II: Fourier analysis, self-adjointness, Academic Press, New York, 1975.

\bibitem{Ry} R. Ryan, Introduction to tensor products of Banach spaces, Springer Monographs in Mathematics, Springer-Verlag, London, 2002.

\bibitem{Rud} W. Rudin, Functional analysis, McGraw-Hill, New York, 1991

\bibitem{Sari} B. Sari, T. Schlumprecht, N. Tomczak-Jaegermann, V. Troitsky, On norm closed ideals in 
$L(l_p ,l_q)$, Studia Mathematica 179 (2007), no. 3, 239--262. 

\bibitem{Sch} H. Schaefer, Topological vector spaces, Graduate Texts in Mathematics, vol. 3, Springer-Verlag, New York-Berlin, 1971.

\bibitem{Sh1} S.-Y. Shaw, Ergodic limits of tensor product semigroups, J. Math. Anal. Appl. 76 (1980), no. 2, 432--439.

\bibitem{Sh2} S.-Y. Shaw, S. Lin, On the equations $Ax=q$ and $SX-XT=Q$, J. Funct. Anal. 77 (1988), no. 2, 352--363.

\bibitem{Tak} M. Takesaki, Theory of operator algebras I, Springer-Verlag, New York, 1979.

\bibitem{Ws} G. Weiss, Weak $L^p$-stability of a linear semigroup on a Hilbert space implies exponential stability, J. Differential Equations 76 (1988), no. 2, 269--285. 

\bibitem{Wh} W. Wonham, Linear multivariable control. A geometric approach, Applications of Mathematics, vol.10,
Springer-Verlag, New York, 1985.

\bibitem{Yo} K. Yosida, Functional analysis, Die Grundlehren der Mathematischen Wissenschaften, vol. 123, 
Academic Press, New York; Springer-Verlag, Berlin, 1965.

\bibitem{Zb0} J. Zabczyk, Remarks on the algebraic Riccati equation in Hilbert space, Appl. Math. Optim. 2 (1975/76), no. 3, 251--258.

\bibitem{Zb} J. Zabczyk, Mathematical control theory: an introduction, Birkhäuser, Boston, 1992.

\end{thebibliography}
\end{document}